\documentclass[12pt,reqno]{amsart}

\usepackage{graphics,graphicx}
\usepackage{pstricks,pst-node,pst-tree}
\usepackage{tikz,caption,subcaption}
\usepackage{mathtools,mathrsfs}
\usepackage{amsmath, amssymb, amsfonts,amsthm}
\usepackage{textcomp,fullpage,pdfpages,newlfont,hyperref}
\usetikzlibrary {positioning}

\newcommand{\cd}{\ \stackrel{d}{\longrightarrow} \ }

\newcommand{\bone}{\mathbf 1}

\newcommand{\bw}{\mathbf{w}}

\newcommand{\FF}{\mathcal{F}}

\newtheorem{theorem}{Theorem}

\newtheorem{corollary}{Corollary}
\newtheorem{proposition}{Proposition}

\theoremstyle{theorem}
\newtheorem{definition}{Definition}

\theoremstyle{remark}
\newtheorem{remark}{Remark}

\begin{document}
\title{Negatively Reinforced Balanced Urn Schemes}
\author{Gursharn Kaur}
\address[Gursharn Kaur]{Theoretical Statistics and Mathematics Unit \\
         Indian Statistical Institute, Bangalore Centre \\ INDIA}
\email{\href{mailto:gursharn.kaur24@gmail.com}{gursharn.kaur24@gmail.com}}

\date{\today}

\selectfont
     
\begin{abstract}
We consider weighted negatively reinforced urn schemes with finitely many colours. 
An urn scheme is called negatively reinforced, if the selection probability for a colour is 
proportional to the weight $w$ of the colour proportion, where $w$ is a non-increasing function.
Under certain assumptions on the replacement matrix $R$ and weight function $w$, 
such as, $w$ is differentiable and $w(0) < \infty$, 
we obtain almost sure convergence of the random configuration of the urn model.
In particular, we show that if $R$ is doubly stochastic the random configuration of the urn converges to the uniform vector, 
and asymptotic normality holds, if the number of colours in the urn are sufficiently large.
\end{abstract}

\keywords{Negative reinforcement, urn models, stochastic approximation, almost sure convergence, central limit theorem.} 

\subjclass[2010]{Primary: 60F05; Secondary: 60G57}

\maketitle

\section{Introduction}
\subsection{Background and Motivation}
The classical P\'olya urn model was originally introduced by P\'olya \cite{Polya30} and since then many generalization
of the classical P\'olya urn scheme have been studied \cite{BaiHu05, Pe90, Svante1, BaTh14a, BaTh14b, maulik1, maulik2, DasMau11}. 
One such generalization where the selection of a colour at every step is done according to an increasing weight function, with a 
random replacement rule was studied by Laurelle and Pages \cite{LaPa13}.
For such non linear urn models they obtained results on the almost sure convergence and central limit theorem of the random configuration.
A different class of urn models, namely, \emph{linear negatively reinforced urn models} was introduced in \cite{BKaur17-1}, 
where selection is done according to a weight function which is linear but non-increasing. In this paper, we 
investigate a generalization of these later class of models, for general \emph{non-increasing} weight functions.
%We first show that for a Lipschitz weight function and sufficiently large number of colours, the colour proportions converge almost surely to a constant vector for any choice of replacement matrix. 
The main tool used in this paper is \emph{stochastic approximation method,} which is a powerful tool to study recursive algorithms.
Recently, Zhang \cite{Zhang2016} has provided asymptotic normality for a stochastic approximation algorithm under certain assumptions, which we use in this work.
% In particular, for the urn models, the assumptions made by Zhang \cite{Zhang2016} are much weaker than those in Bai and Hu \cite{BaiHu05}.
% Additionally, it improves the results obtained by Laurelle and Pages \cite{LaPa13}. \\

As mentioned in \cite{BKaur17-1}, resource constrain modelling problems is one of the main motivation to study such models. 
In particular, multi-server queuing systems with capacity constrains \cite{LucMcDiar2005, LucMcDiar2006} are good examples.  
For such models a desirable outcome is a balancing of the \emph{loads}. 
In other words, it is desirable to obtain uniform load distribution at the limit. 
We will see later, that limiting uniform distribution can only be achieved, if we choose a doubly stochastic replacement matrix. 
As a result we mainly focus on the doubly stochastic matrices, and show that the almost sure convergence to uniform distribution holds under 
fairly general assumptions on the weight function. We further establish the corresponding central limit theorems.
% 
% Using the stochastic approximation techniques similar to Zhang \cite{Zhang2016},
% we obtain asymptotics for the urn configuration and the colour count statistics for negatively reinforced urn models,
% where selection probabilities are proportional to non-increasing weight function of the colour proportions.

\subsection{Model}
In this work, we will only consider \emph{balanced} urn schemes with $k$-colours, index by $S := \left\{1,\ldots, k\,\right\}$. 
We essentially work under the framework introduced in \cite{BKaur17-1}. 
For the sake of completeness, we provide here the complete description of the model, which is exactly similar to what is 
presented in \cite{BKaur17-1}, except we use more general non-increasing weight functions. 

We denote by $R := \left(\left(R_{i,j}\right)\right)_{1\leq i,j \leq k}$ the \emph{replacement matrix}, 
that is, 
$R_{i,j} \geq 0$ is the \emph{number of balls of colour $j$ to be placed in the the urn when the colour of the selected ball 
is $i$}. The model will be called \emph{balanced}, if all the row sums of $R$ are constant. In that case, 
dividing the entries of $R$ by the common row total, without loss we may assume $R$ is a \emph{stochastic matrix}. 
We will also assume that the starting configuration $U_0:=\left(U_{0,j}\right)_{1 \leq j \leq k}$ 
is a probability distribution on the set of colours $S$. As we will see from the proofs of our main results, 
this apparent loss of generality can easily be removed. For simplicity, in this work we also assume $U_{0,j} > 0$ for every
$1 \leq j \leq k$. 

Denote by $U_n := \left(U_{n,j}\right)_{1 \leq j \leq k} \in [0, \infty)^k$
the random configuration of the urn at time $n$. Also let $\FF_n := \sigma\left(U_{0}, U_{1}, \cdots, U_{n}\right)$ be the
natural filtration. The $(n+1)$-th randomly selected colour will be denoted by $Z_n$, which has the conditional distribution
given $\FF_n$ as 
\begin{equation}
P\left( Z_n = j \,\Big\vert\, \FF_n \right) 
\propto 
w \left(\frac{U_{n,j}}{\sum_{i=1}^k U_{n,i}}\right), \,\,\, 1 \leq j \leq k.
\label{prob-weight}
\end{equation}
where $w:[0,1] \to \mathbb{R}^+$ is a \emph{non-increasing} function. 

Starting with $U_0$ we define $\left(U_n\right)_{n \geq 0}$ recursively as follows:
\begin{equation}
U_{n+1} = U_n + \chi_{n+1} R.
\label{Equ:Basic-Recursion} 
\end{equation}
% \begin{equation}
% \label{Equ:Fundamental-Recursion}
% U_{n+1}=U_n + R\left(Z_n, \cdot\right),
% \end{equation}
% where $R\left(Z_n, \cdot\right)$ is the  $Z_n$-th row of the replacement matrix $R$.
% Let 
where $\chi_{n+1} := \left(\bone\left(Z_n = j \right)\right)_{1 \leq j \leq k}$. 
% then equation ~~\eqref{Equ:Fundamental-Recursion} is equivalent to 

We call the process $\left(U_n\right)_{n \geq 0}$, a \emph{negatively reinforced urn scheme} with
initial configuration $U_0$ and replacement matrix $R$. 
In this work, we will be interested in studying the asymptotic properties of the following two processes: \\

\noindent
{\bf Random configuration of the urn:}
Observe that for all $n \geq 0$,
\begin{equation}
\sum_{j=1}^{k} U_{n,j} = n + 1.
\label{Equ:Sum-of-weights}
\end{equation}
This holds because $R$ is a stochastic matrix and $U_0$ is a probability vector. 
Thus the \emph{random configuration of the urn}, namely, $\displaystyle{\frac{U_n}{n+1}}$
is a probability mass function. \\
% Further,  
% \begin{equation}
% \Pbold\left(Z_n = j \,\Big\vert\, \FF_n \right) 
% = \frac{\theta}{k \theta - 1} - \frac{1}{k \theta -1} \frac{U_{n,j}}{n+1}
% , \,\,\, 1 \leq j \leq k. 
% \label{prob}
% \end{equation}
% Thus, $\displaystyle{\frac{U_n A}{n+1}}$ is the conditional distribution of the $(n+1)$-th
% selected colour, namely $Z_n$, given $U_0, U_1, \ldots, U_n$, where
% \begin{equation}
% A_{k \times k} = \frac{\theta}{k \theta -1} J_k - \frac{1}{k \theta -1} I_k,
% \label{Equ:Def-A}
% \end{equation}
% and $J_{k} := \bone^{T}\bone$ is the $k \times k$ matrix with all entries equal to $1$ and $I_{k}$ is the 
% $k \times k$-identity matrix.\\

\noindent
{\bf Colour count statistics:}
Let $N_n := \left(N_{n,1}, \hdots , N_{n,k}\right)$ be the vector of length $k$,
whose $j$-th element is the number of times colour $j$ was selected in the first $n$ trials, that is
\begin{equation}
\label{Def:Nn}
N_{n,j} = \sum_{m=0}^{n-1} \bone\left(Z_m = j\right), \,\,\, 1 \leq j \leq k.
\end{equation}
It is easy to note that from  ~~\eqref{Equ:Basic-Recursion} it follows
\begin{equation}
\label{Un-Nn}
U_{n+1} = U_0 + N_{n+1} R. 
\end{equation}

\subsection{Outline}
In Section \ref{SAalgorithm} we first establish a relation between the urn model and the stochastic approximation
algorithm, then in Section \ref{Main-results-AS} and Section \ref {Main-results-CLT} we present our main results.
In Section \ref{Sec:Technical-Results} we give the technical results and proofs of the main results are given in Section \ref{Section-Proofs}.
In Section \ref{Section-Examples} some examples are given.

\section{Stochastic Approximation and Urn}\label{SAalgorithm}
In this section we first define a stochastic approximation algorithm.
A stochastic approximation algorithm $\left( X_n \right)_{n\geq 0}$ is defined as a stochastic process in $\mathbb{R}^k$, given by 

\begin{equation}\label{SA-recursion}
X_{n+1} =X_n + \gamma_{n+1} h\left(X_n\right) + \gamma_{n+1} M_{n+1}, \;\;\;\; n \geq 0 
\end{equation} 
where $h: \mathbb{R}^k \to \mathbb{R}^k$ and 
\begin{enumerate}
\item[(i)] $\left(\gamma_n\right)_{n\geq 1}$ is a sequence of positive real numbers, such that,
\begin{equation}\label{Step-size}
\sum_{n=1}^\infty \gamma_n = \infty \;\;\text{ and } \;\; \sum_{n=1}^\infty \gamma_n^2 < \infty . 
\end{equation}

\item[(ii)] $\left( M_n\right)_{n\geq 1}$ is a square integrable martingale difference sequence with respect to the filtration 
$\mathcal{G}_n = \sigma\{ X_m, M_m, \; m\leq n\}$ and there exists a constant $ C > 0$, such that
\begin{equation}\label{Martingale-bound}
E\left[\|M_{n+1}\|^2 |\mathcal{G}_n \right] \leq C\left(1+\|X_n\|^2\right) \;a.s. 
\end{equation}
for $n\geq 0$.
\end{enumerate}
In the next two subsections we will show that the vector of colour proportions and colour count proportions 
can be written as a stochastic approximation algorithm.

\subsection{Stochastic Approximation Algorithm for the Random Urn Configuration}
We put
\begin{equation}
Y_n \coloneqq \dfrac{U_n}{n+1},  
\end{equation}
which is the vector of colour proportions at time $n$. Observe that, for the urn model defined in equation \eqref{Equ:Basic-Recursion}, we have
\begin{equation}\label{Conditional-Mean}
E\left[U_{n+1}-U_n|\mathcal{F}_n\right] = \frac{\bw (Y_n)}{S_w(Y_n)}R
\end{equation}
where $\bw(Y_n) = \left( w(Y_{n,1}), w(Y_{n,2}), \cdots, w(Y_{n,k}) \right)$, and $S_w(Y_n) =\sum_{i=1}^k  w(Y_{n,i})$.

Therefore the recurrence relation in equation \eqref{Equ:Basic-Recursion} can be written as 
\begin{align}	
U_{n+1}& = U_n + E[\chi_{n+1}|\mathcal{F}_n]R +\big[ \chi_{n+1} -E[\chi_{n+1}|\mathcal{F}_n] \big]R \nonumber \\
&= U_n + \frac{\bw(Y_n)}{S_w(Y_n)}R + M_{n+1}R\label{Recursion-martingale}
\end{align}
where $ M_{n+1} = \chi_{n+1} -E\left[\chi_{n+1}|\mathcal{F}_n\right]$ is an $\mathcal{F}_n$ martingale difference. Now observe that
\begin{align*}
\frac{U_{n+1}}{n+2}&= \frac{U_n}{n+2} + \frac{1}{n+2}\frac{\bw(Y_n)}{S_w(Y_n)}R + \frac{1}{n+2} M_{n+1}R\\
\implies Y_{n+1}&= Y_n\frac{n+1}{n+2} + \frac{1}{n+2}\frac{\bw(Y_n)}{S_w(Y_n)}R + \frac{1}{n+2} M_{n+1}R \\
\implies Y_{n+1} &= Y_n + \frac{1}{n+2}\left( \dfrac{\bw(Y_n)}{S_w(Y_n)}R -Y_n\right) + \frac{1}{n+2} M_{n+1}R 
\end{align*}
which is exactly of the form given in the equation \eqref{SA-recursion}, that is the urn configuration $Y_n$ can be written 
as a $k$-dimensional stochastic approximation algorithm given by:
\begin{equation}
Y_{n+1}= Y_n + \gamma_{n+1} h(Y_n) +\gamma_{n+1} M_{n+1}R
\label{Urn-Recursion}
\end{equation} 
where $\gamma_n = \dfrac{1}{n+1}$, and $h : \mathbb{R}^k \to \mathbb{R}^k$ is given by
\begin{equation}
h(y) = \frac{\bw(y)}{S_w(y)}R - y.
\label{Urn-function}
\end{equation}
where we extend the function $w$ continuously to whole of $\mathbb{R}$, by making it a constant function outside the interval $[0,1]$, that is,
$w(y) = w(0) $ for $ y\leq 0$ and $w(y) = w(1) $ for $ y\geq 1$. Also note that $\gamma_n \sim \mathcal{O}\left(n^{-1}\right)$ 
satisfies the required conditions given in (ii) and $\left(M_{n}R\right)_{n\geq 1}$ is a martingale difference sequence that is 
\begin{equation}
E\left[M_{n+1}R\vert\mathcal{F}_n\right] = 0, \;\;\forall n \geq 0.
\end{equation}
and since $R$ is a stochastic matrix we get
\begin{align*} 
\|M_{n+1} R\|^2 & = \sum_{i=1}^k \left|\sum_{j=1}^k M_{n+1,j}R_{j,i}\right |^2 \\
&\leq \sum_{i=1}^k \sum_{j=1}^k \left|M_{n+1,j}\right |^2 \\
&= k \sum_{j=1}^k \left|\chi_{n+1,j} - E[\chi_{n+1,j}|\mathcal{F}_n]\right |^2 \\
&\leq k \sum_{j=1}^k \left|\chi_{n+1,j}\right|^2 + E[\chi_{n+1,j}|\mathcal{F}_n]^2 
\end{align*}
Now since $\chi_{n+1,j}^2 = \chi_{n+1,j}$ (as it only takes value $0$ or $1$) and $\sum_{j=1}^k\chi_{n+1,j} = 1$, therefore we get
\begin{align}
E\left[\|M_{n+1} R\|^2 |\mathcal{F}_n \right] &\leq k \left(1+ \sum_{j=1}^k E[\chi_{n+1,j}|\mathcal{F}_n]^2 \right) \nonumber \leq k\left(1+ k \right). \label{Bound-h-Martingale}
\end{align}
Thus $\left(M_{n}R\right)_{n\geq 1}$ also satisfies the conditions given in equation \eqref{Martingale-bound}.
Therefore, the ODE associated to \eqref{Urn-Recursion} is 
\begin{equation}\label{ODE-Yn}
\dot{y} = h(y)
\end{equation}
where $h$ is given in equation \eqref{Urn-function}.

\subsection{Stochastic Approximation Algorithm for the Colour Count Statistics}
\label{Sec:colour-Counts}
% Let $N_n \coloneqq \sum_{m=1}^{n} \chi_m,$ be the vector of colour counts, such that for every $i = 1, 2,\cdots, k$;
% $N_{n,i}$ is the number of time colour $i$ is drawn from the urn in $n$ draws. 
% Observe that by equation ~~\eqref{Equ:Basic-Recursion}, the relation between $U_n$ and $N_n$ is given by 
% \begin{equation}
% \label{Un-Nn}
% U_{n+1} = U_0 + N_{n+1} R.
% \end{equation} 
Recall that from the definition $\sum_{i=1}^k N_{n,i} = n$, so we denote the colour count proportions by 
\[
\displaystyle{\tilde{Y}_n\coloneqq \frac{N_n}{n}}
\]
Note that we can write
\begin{align}
N_{n+1} &= N_n +\chi_{n+1}\nonumber \\
&= N_n +E\left[\chi_{n+1}|\mathcal{F}_n\right]+ \left(\chi_{n+1} -E\left[\chi_{n+1}|\mathcal{F}_n\right]\right)\nonumber \\
&= N_n + \frac{\bw(Y_n)}{S_w(Y_n)} + M_{n+1} \nonumber \\
\frac{N_{n+1}}{n+1} &= \frac{N_n}{n} + \frac{1}{n+1}\left[\frac{\bw(Y_n)}{S_w(Y_n)}-\frac{N_n}{n} \right] + \frac{1}{n+1} M_{n+1}\nonumber\\
\implies\;\;\;\;\;\; \tilde{Y}_{n+1}&= \tilde{Y}_n + \frac{1}{n+1}\left[\frac{\bw\left(Y_n\right)}{S_w\left(Y_n\right)} -\tilde{Y}_n \right] + \frac{1}{n+1} M_{n+1} \label{SA-Nn-Yn}
\end{align}
Using \eqref{Un-Nn} we get
\begin{equation}
Y_n = \frac{1}{n+1}Y_0 + \frac{n}{n+1}\tilde{Y}_n R = :\tilde{Y}_nR +\delta_n
\label{Equ:Un-Nn} 
\end{equation}
for 
\begin{equation*}
\delta_n = \frac{1}{n+1}Y_0 - \frac{1}{n+1}\tilde{Y}_n R 
\end{equation*}
Therefore we can rewrite equation ~\eqref{SA-Nn-Yn} as 
\begin{align} 
\tilde{Y}_{n+1}&= \tilde{Y}_n +\frac{1}{n+1}\left[\frac{\bw\left(\tilde{Y}_nR +\delta_n \right)}{S_w\left(\tilde{Y}_nR +\delta_n\right)} - \tilde{Y}_n\right] + \frac{1}{n+1} M_{n+1} \nonumber \\
&= \tilde{Y}_n + \frac{1}{n+1}\left[\frac{\bw\left(\tilde{Y}_nR \right)}{S_w\left(\tilde{Y}_nR\right)}- \tilde{Y}_n\right] + \frac{1}{n+1}\epsilon_n+ \frac{1}{n+1} M_{n+1}
\label{SA-Nn} 
\end{align}
where 
\[ \epsilon_n = \frac{\bw\left(\tilde{Y}_nR +\delta_n \right)}{S_w\left(\tilde{Y}_nR +\delta_n\right)} -\frac{\bw\left(\tilde{Y}_nR \right)}{S_w\left(\tilde{Y}_nR\right)}.\]
Therefore $\tilde{Y}_n$ can also be written as a stochastic approximation algorithm as given in equation \eqref{SA-recursion}.
Since $\delta_n \to 0$, $\epsilon_n \to 0$, as $n \to \infty$, the ODE associated to ~\eqref{SA-Nn} is 
\begin{equation}\label{ODE-Nn}
\dot{\tilde{y}} = \tilde{h}\left(\tilde{y}\right)
\end{equation}
where $\tilde{h}: \mathbb{R}^k \to \mathbb{R}^k$ is such that
\begin{equation} \label{Count-function}
\tilde{h}\left(\tilde{y}\right) = \frac{\bw\left(\tilde{y}R \right)}{S_w\left(\bw(\tilde{y}R)\right)}-\tilde{y}. 
\end{equation}
For stochastic approximation technique, we will mostly refer to the work of Bena\"im \cite{Benaim99}, 
Kushner-clark \cite{KushnerClark78} and Borkar \cite{Borkar2008}.
In the next two sections we state our main results for the non-linear weight functions.

\section{Almost sure convergence}\label{Main-results-AS} 
In this section, the almost sure convergence of the random processes $\left(Y_n\right)_{n\geq 0}$ and $\left(\tilde{Y}_n\right)_{n\geq 0}$
are obtained under different sufficient conditions. Before stating our main results we need the following two definitions:
\begin{definition}
A function $f:\mathbb{R}^k\to \mathbb{R}^k $ is called Lipschitz, if there exists a finite real number $C'$ such that 
\begin{equation}
\|f(x)-f(y)\|\leq C' \|x-y\| ;\;\;\forall x,y \in \mathbb{R}^k.
\end{equation}
For a Lipschitz function $f$, the Lipschitz constant is defined as
\begin{equation}
M\coloneqq \sup_{x\neq y}\frac{\|f(x) - f(y)\|}{\|x-y\|}
\end{equation}
and such functions will be referred as $Lip(M)$. The function $f$ is called a \emph{contraction} if $M <1$.
\end{definition}

\begin{definition}
An equilibrium point $y^*$ of the differential equation $\dot{y}(t) = h\left(y(t)\right)$ is a
point for which $h(y^*) = 0. $
\end{definition}
Note that, for the $h$ function given in equation \eqref{Urn-function}, $y^*$ is an equilibrium point if 
\begin{equation}
h(y^*)=0\iff \bw(y^*)R =S_w(y^*)y^*. 
\end{equation}

To start with, we need the ODE in equation \eqref{ODE-Yn} and \eqref{ODE-Nn} to have a unique solution. 
A sufficient condition for the ODEs to have a unique solution, is when $h$ and $\tilde{h}$ are Lipschitz functions.
We will assume throughout this paper that the function $w$ is continuously differentiable, 
which implies that the function $h$ and $\tilde{h}$ are both Lipschitz and this
ensures that the associated ODEs have unique solution for any initial vector $Y_0$. 

The equilibrium points of $h$ are important as they are possible limit points for the solution of the ODEs.
For a nonlinear weight function $w$ the unique equilibrium point is guaranteed assuming that 
the function $F: \mathbb{R}^k \to \mathbb{R}^k$  defined as 
\begin{equation}
F(y) \coloneqq \dfrac{\bw(y)}{S_w(y)}R,
\end{equation}
is a contraction map. We now present the results depending on whether $F$ is a contraction.

\subsection{$F$ is a contraction}

\begin{theorem}\label{Thm:As-Convergence}
Suppose $w$ is a non-increasing weight function and $F$ is a contraction map then 
\begin{equation} \label{AS-Convergence-non-linear}
Y_n \longrightarrow y^* \;\;a.s., \;\;\text{and} \;\; \tilde{Y}_n \longrightarrow \tilde{y}^* \;\;\;a.s.
\end{equation}
where $y^*$ is the unique fixed point of $F$ and 
\begin{equation}
\tilde{y}^* = \frac{w(y^*)}{S_w(y^*)}. 
\end{equation}
In particular, convergence in \eqref{AS-Convergence-non-linear} holds, whenever non-increasing function
$w$ is a Lip(M) function and $\sqrt{k}> \dfrac{2M}{w(1)}$. 
\end{theorem}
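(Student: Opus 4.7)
The plan is to view both recursions \eqref{Urn-Recursion} and \eqref{SA-Nn} as instances of the standard stochastic approximation scheme and apply the ODE method (as in Benaim \cite{Benaim99} or Borkar \cite{Borkar2008}). Section \ref{SAalgorithm} has already verified the hypotheses needed for this theory: the step sizes $\gamma_n = 1/(n+1)$ satisfy \eqref{Step-size}, the drift functions $h$ and $\tilde h$ are Lipschitz because $w$ is $C^1$, the martingale noise $M_{n+1}R$ has the conditional second moment bound \eqref{Bound-h-Martingale} (and the perturbation $\epsilon_n$ in \eqref{SA-Nn} is $o(1)$ a.s.), and the iterates $Y_n, \tilde Y_n$ are automatically bounded because they live in the probability simplex. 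Thus the remaining work is to identify the equilibria and show they are global attractors of \eqref{ODE-Yn} and \eqref{ODE-Nn}.

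Identifying the equilibria is immediate from the contraction hypothesis. Banach's fixed point theorem applied to $F$ yields a unique $y^*$ with $F(y^*) = y^*$, which is the unique zero of $h$. Setting $\tilde y^* := \bw(y^*)/S_w(y^*)$, one has $\tilde y^* R = F(y^*) = y^*$, hence
\[
\tilde h(\tilde y^*) = \frac{\bw(\tilde y^* R)}{S_w(\tilde y^* R)} - \tilde y^* = \frac{\bw(y^*)}{S_w(y^*)} - \tilde y^* = 0.
\]
Conversely, any zero $\tilde z$ of $\tilde h$ satisfies $F(\tilde z R) = \tilde z R$, so $\tilde z R = y^*$ by uniqueness of the fixed point of $F$, and then $\tilde z = \bw(y^*)/S_w(y^*) = \tilde y^*$.

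For global attractivity I would use the quadratic Lyapunov function $V(y) = \tfrac12\|y - y^*\|^2$ for \eqref{ODE-Yn}. Along trajectories,
\[
\dot V = \langle y - y^*,\, F(y) - F(y^*)\rangle - \|y - y^*\|^2 \leq -(1-M_F)\|y - y^*\|^2,
\]
by Cauchy-Schwarz and the contraction constant $M_F < 1$, so $V(y(t)) \to 0$ exponentially and $y^*$ is globally asymptotically stable. The standard SA convergence theorem then yields $Y_n \to y^*$ a.s. For $\tilde Y_n$, rather than redoing a Lyapunov argument for the slightly awkward $\tilde h$, I would leverage the just-obtained convergence $Y_n \to y^*$: the recursion \eqref{SA-Nn-Yn} can be rewritten as
\[
\tilde Y_{n+1} = \left(1 - \tfrac{1}{n+1}\right)\tilde Y_n + \tfrac{1}{n+1}\bigl(a_n + M_{n+1}\bigr),
\]
where $a_n := \bw(Y_n)/S_w(Y_n) \to \bw(y^*)/S_w(y^*) = \tilde y^*$ a.s. by continuity. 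A standard Cesaro/Kronecker-lemma argument for the $a_n$ part, together with the $L^2$ martingale estimate $\sum_n \tfrac{1}{(n+1)^2}E[\|M_{n+1}\|^2\mid \FF_n] < \infty$ coming from \eqref{Bound-h-Martingale}, then gives $\tilde Y_n \to \tilde y^*$ a.s.

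Finally, the ``In particular'' clause requires verifying that Lip$(M)$ for $w$ together with $\sqrt{k} > 2M/w(1)$ forces $F$ to be a contraction. Using the uniform lower bound $S_w(y) \geq k\,w(1)$ on the simplex, the entrywise estimate $|w(y_i) - w(y_i')| \leq M|y_i - y_i'|$, the fact that $\sum_j p_j(y) = 1$, and that $R$ is stochastic, a direct bound on $\|F(y) - F(y')\|$ produces a Lipschitz constant of order $2M/(\sqrt k\, w(1))$, which is strictly less than $1$ exactly under the stated hypothesis. This last calculation is the main bookkeeping step and is where I expect the only real friction to lie; the conceptual heart of the argument, the Lyapunov estimate, is essentially automatic once $F$ is known to contract. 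The only other point to check is that the continuous extension of $w$ outside $[0,1]$ preserves the Lipschitz property, which is immediate.
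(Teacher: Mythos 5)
Your proposal is correct and follows the same overall architecture as the paper's proof: recast $Y_n$ and $\tilde{Y}_n$ as stochastic approximation schemes, obtain the unique equilibrium $y^*$ from Banach's fixed point theorem, and conclude via the ODE method. The differences lie in what you prove versus what the paper cites. Where the paper simply invokes Theorem 2 and Corollary 3 of Borkar (p.~126) --- which is precisely the statement that for $h = F - \mathrm{id}$ with $F$ a contraction the iterates converge a.s.\ to the unique fixed point --- you supply the underlying Lyapunov estimate $\dot V \le -(1-M_F)\|y-y^*\|^2$ for $V=\tfrac12\|y-y^*\|^2$, which is the standard proof of the cited result and is correct (boundedness of the iterates in the simplex takes care of the remaining hypothesis). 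For $\tilde{Y}_n$ the paper is very terse, merely observing that $\tilde{y}^*$ is the equilibrium of \eqref{ODE-Nn} and asserting convergence; your Ces\`aro/Kronecker argument, which exploits the already-established convergence $Y_n\to y^*$ in the exact recursion \eqref{SA-Nn-Yn}, is a cleaner and fully elementary route to the same conclusion and sidesteps controlling the perturbation $\epsilon_n$ in \eqref{SA-Nn}. The ``in particular'' clause is exactly Proposition \ref{Prop:F-contraction-sufficient-condition}(i) of the paper, whose proof is the computation you sketch: $S_w(y)\ge k\,w(1)$ on the simplex together with $\|\bw(x)-\bw(y)\|\le M\|x-y\|$ and $|S_w(x)-S_w(y)|\le M\sqrt{k}\,\|x-y\|$ gives the contraction constant $M(1+\sqrt{k})/(k\,w(1))\le 2M/(\sqrt{k}\,w(1))<1$. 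One caution, which applies equally to the paper's own Proposition: both arguments silently drop the right factor $R$ in $F(y)=\frac{\bw(y)}{S_w(y)}R$, and for a general row-stochastic $R$ the Euclidean bound $\|vR\|\le\|v\|$ is false (it holds for doubly stochastic $R$); in general one picks up the operator norm of $R$ as an extra factor, so the ``friction'' you anticipated is real, though it only affects the explicit constant and not the structure of the argument.
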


\begin{corollary}
From equation ~\eqref{Equ:Un-Nn} we get 
\[y^* = \tilde{y}^*R,\]
or \[ \tilde{y}^* = \frac{w(y^*)}{S_w(y^*)}\]
where $y^*$ and $\tilde{y}^*$ are given in Theorem ~\ref{Thm:As-Convergence}.
\end{corollary}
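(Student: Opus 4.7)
The plan is to deduce the identity $y^*=\tilde y^*R$ by passing to the limit in the deterministic relation \eqref{Equ:Un-Nn}, and then to note that the second displayed equation is just the definition of $\tilde y^*$ provided by Theorem~\ref{Thm:As-Convergence}. This is a routine corollary, with no serious obstacle; the only thing to check is that the error term $\delta_n$ vanishes in the limit.

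First, I would recall \eqref{Equ:Un-Nn}, namely
\[
Y_n = \tilde Y_n R + \delta_n, \qquad \delta_n = \frac{1}{n+1}Y_0 - \frac{1}{n+1}\tilde Y_n R.
\]
Since $\tilde Y_n = N_n/n$ is a probability vector for every $n\ge 1$ and $R$ is a stochastic matrix, the quantity $\tilde Y_n R$ is uniformly bounded in $n$; combined with the fact that $Y_0$ is a fixed vector, this gives $\|\delta_n\| \le \frac{1}{n+1}\bigl(\|Y_0\| + \|\tilde Y_n R\|\bigr) \to 0$ deterministically as $n\to\infty$.

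Next, by Theorem~\ref{Thm:As-Convergence}, on a set of full probability we have $Y_n\to y^*$ and $\tilde Y_n\to\tilde y^*$. Multiplying by the fixed matrix $R$, continuity yields $\tilde Y_n R\to \tilde y^* R$ almost surely. Passing to the limit in $Y_n = \tilde Y_n R + \delta_n$ and using $\delta_n\to 0$ gives
\[
y^* = \tilde y^* R,
\]
which is the first claimed identity. For the second, Theorem~\ref{Thm:As-Convergence} already defines $\tilde y^* = \bw(y^*)/S_w(y^*)$; substituting this back into $y^* = \tilde y^* R$ recovers the fixed-point equation $y^* = F(y^*)$, which is consistent and shows the two forms of the statement are equivalent. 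This completes the proof.
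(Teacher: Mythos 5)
Your argument is correct and is exactly the route the paper intends: the corollary is stated as an immediate consequence of passing to the limit in \eqref{Equ:Un-Nn}, using the almost sure convergence of $Y_n$ and $\tilde Y_n$ from Theorem~\ref{Thm:As-Convergence} and the observation that $\delta_n\to 0$ (which you verify carefully via the boundedness of $\tilde Y_n R$). Your remark that the second identity follows from the equilibrium characterization of $\tilde y^*$ together with $y^*=\tilde y^*R$ matches the paper's reading as well.
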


\subsection{$F$ is not a contraction}
In the case when $F$ is not a contraction, we will only consider doubly stochastic replacement matrices. 
We start with the following observation.
\begin{proposition}
The uniform vector $\frac{1}{k}\bone$ is an equilibrium point of the ODE in equation \eqref{ODE-Yn}, if and only if, $R$ is a doubly stochastic matrix.
\end{proposition}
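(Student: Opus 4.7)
The plan is a direct calculation: substitute $y^* = \tfrac{1}{k}\bone$ into $h(y) = \tfrac{\bw(y)}{S_w(y)}R - y$ and see what the vanishing of $h(y^*)$ forces on $R$.

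First I would observe that at $y^* = \tfrac{1}{k}\bone$, every coordinate equals $1/k$, so the vector $\bw(y^*)$ becomes $w(1/k)\bone$, which is well defined and strictly positive since $w$ takes values in $\mathbb{R}^+$. Consequently $S_w(y^*) = k\,w(1/k)$, and the quotient simplifies to $\tfrac{\bw(y^*)}{S_w(y^*)} = \tfrac{1}{k}\bone$. Plugging this into $h$ gives
\begin{equation*}
h\!\left(\tfrac{1}{k}\bone\right) = \tfrac{1}{k}\bone R - \tfrac{1}{k}\bone = \tfrac{1}{k}\bigl(\bone R - \bone\bigr).
\end{equation*}
Hence $h(y^*) = 0$ if and only if $\bone R = \bone$, that is, every column sum of $R$ equals $1$.

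Second, I would invoke that $R$ is already assumed to be a stochastic matrix (row sums equal $1$, as specified in the Model subsection). Combined with the column-sum-one condition just derived, this is precisely the definition of doubly stochastic. Conversely, if $R$ is doubly stochastic then $\bone R = \bone$, and reversing the calculation above yields $h(\tfrac{1}{k}\bone) = 0$, so $\tfrac{1}{k}\bone$ is an equilibrium point.

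There is no genuine obstacle here; the only subtlety to mention explicitly is that the argument relies on $w(1/k) > 0$, which holds because $w$ maps into $\mathbb{R}^+$, so that dividing by $S_w(y^*)$ is legitimate and the simplification of $\bw(y^*)/S_w(y^*)$ to $\tfrac{1}{k}\bone$ is valid. The two implications then follow from the single elementary identity above.
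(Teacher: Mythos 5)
Your proof is correct and follows essentially the same route as the paper's: substitute $y^*=\tfrac{1}{k}\bone$, note $\bw(y^*)/S_w(y^*)=\tfrac{1}{k}\bone$, and reduce the equilibrium condition to $\bone R=\bone$, i.e.\ column sums equal one, which together with row-stochasticity is exactly double stochasticity. Your explicit remark that $w(1/k)>0$ justifies the division is a small but welcome addition that the paper leaves implicit.
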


\begin{proof}
Note that,
\begin{align*}
h\left(\frac{1}{k}\bone \right) =0 &\iff \frac{\bw\left(\frac{1}{k}\right)}{S_w\left(\frac{1}{k}\bone\right)}R = \frac{1}{k}\bone \\ 
&\iff \frac{1}{k}\bone R = \frac{1}{k}\bone
\end{align*}
Thus, uniform is an equilibrium point, if and only if, $R$ is a doubly stochastic matrix.
\end{proof}

Assuming that $R$ is doubly stochastic, $\dfrac{1}{k}\bone$ is an equilibrium point for both the ODEs given in equation \eqref{ODE-Yn} and \eqref{ODE-Nn}, that is 
\begin{equation}
h\left(\frac{1}{k}\bone\right) = 0\;\;\;\text{and }\;\;\; \tilde{h}\left(\frac{1}{k}\bone\right) = 0
\end{equation} 
where $h$ and $\tilde{h}$ are defined in equation \eqref{Urn-function} and \eqref{Count-function}. 
In the next theorem, we show that for a doubly stochastic 
replacement matrix $R$ the random urn configuration converges almost surely. 

\begin{theorem}\label{Thm:DS-SLLN}
Let $w$ be a non-increasing weight function and $R$ be a doubly stochastic replacement matrix, such that for every eigenvalue $\lambda$ of $R$ 
\begin{equation}
\Re(\lambda)> \frac{kw\left(\frac{1}{k}\right)}{w'\left( \frac{1}{k}\right)},
\label{Cond:Stability} 
\end{equation}
where $\Re(\lambda)$ denotes the real part of the eigenvalue $\lambda$, then as $n \to \infty$
\begin{equation} \label{Equ:DS-Convg}
Y_n \longrightarrow \dfrac{1}{k} \bone \;\; a.s. \;\;\;\text{ and }\;\;\; \tilde{Y}_n \longrightarrow \dfrac{1}{k} \bone \;\; a.s.
\end{equation}
\end{theorem}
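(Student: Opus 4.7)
The plan is to treat both $Y_n$ and $\tilde Y_n$ via the stochastic approximation framework derived in Section~\ref{SAalgorithm}, to show that $y^* = \tfrac{1}{k}\bone$ is a locally exponentially stable equilibrium of the associated ODEs under condition~\eqref{Cond:Stability}, and then to invoke a standard SA convergence theorem to upgrade local stability into almost sure convergence. The preceding proposition has already identified $y^*$ as an equilibrium, so what remains is the spectral analysis at $y^*$ and the bridge from local to global.

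For the linearization, I would write $h(y) = g(y)R - y$ with $g(y)_i = w(y_i)/S_w(y)$ and differentiate at $y^*$. Using $w(y_i^*) = w(1/k)$ and $S_w(y^*) = kw(1/k)$, a direct computation yields
\[
Dg(y^*) \;=\; \frac{w'(1/k)}{k\,w(1/k)}\Bigl(I - \tfrac{1}{k}\bone\bone^{\top}\Bigr).
\]
Set $c := w'(1/k)/[k\,w(1/k)] \le 0$. Because $R$ is doubly stochastic, $\bone$ is both a left and a right eigenvector of $R$, so $R$ and $\tfrac{1}{k}\bone\bone^{\top}$ share the invariant decomposition $\mathbb{R}^{k} = \mathrm{span}(\bone) \oplus \bone^{\perp}$. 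Chasing the chain rule on this decomposition shows that $Dh(y^*)$ has eigenvalue $-1$ on $\mathrm{span}(\bone)$ and eigenvalues $c\lambda - 1$ on $\bone^{\perp}$, one for each eigenvalue $\lambda \neq 1$ of $R$. Since $c \le 0$, condition~\eqref{Cond:Stability} is exactly $c\,\Re(\lambda) < 1$ for every such $\lambda$, so every eigenvalue of $Dh(y^*)$ has strictly negative real part. The Jacobian $D\tilde h(\tilde y^*)$ differs only by the order of multiplication by $R$ and therefore has the same spectrum.

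Given that $Y_n$ lies in the probability simplex (hence is bounded), that $\gamma_n = 1/(n+1)$ satisfies~\eqref{Step-size}, and that the martingale noise satisfies~\eqref{Bound-h-Martingale}, I would then solve the Lyapunov equation on the tangent space $\bone^{\perp}$ to produce a symmetric positive definite $P$ for which $V(y) = (y-y^*)^{\top}P(y-y^*)$ is a strict local Lyapunov function for the ODE. A convergence theorem in the spirit of Bena\"im~\cite{Benaim99} or Borkar~\cite{Borkar2008}, combined with a non-convergence-to-unstable-sets argument, then delivers $Y_n \to y^*$ almost surely. The recursion~\eqref{SA-Nn} for $\tilde Y_n$ is, after absorbing the vanishing errors $\delta_n$ and $\epsilon_n$, in exactly the same framework at the same equilibrium, so the same theorem yields $\tilde Y_n \to \tfrac{1}{k}\bone$.

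The hard part will not be the spectral calculation at $y^*$ — the doubly stochastic structure makes that remarkably clean — but the passage from local exponential stability to the \emph{global} almost sure statement claimed by the theorem. Without a globally defined Lyapunov function, one has to rule out convergence of $Y_n$ to other equilibria of $h$ on the simplex or to its boundary; this calls either for a uniqueness argument for the fixed points of the map $y \mapsto g(y)R$ on the simplex, or for a non-convergence-to-repelling-set result that exploits the persistent excitation of the martingale $M_{n+1}R$. This is the step where condition~\eqref{Cond:Stability} must earn its keep beyond merely producing local spectral stability.
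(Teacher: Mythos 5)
Your spectral analysis at $y^*=\tfrac{1}{k}\bone$ is correct and is exactly what the paper does: your constant $c$ is the paper's $b=w'(1/k)/(kw(1/k))$, the Jacobian is $Dh(\tfrac{1}{k}\bone)=bR-\tfrac{b}{k}J-I$, the right eigenvectors of $R$ for $\lambda\neq 1$ are automatically orthogonal to $\bone$ because $\bone$ is a left eigenvector (doubly stochastic), and condition~\eqref{Cond:Stability} is precisely $\Re(b\lambda-1)<0$ for all $\lambda\neq 1$. Up to that point your argument and Proposition~\ref{Prop-Stability} coincide.

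The genuine gap is the one you name yourself and then do not close: the passage from local asymptotic stability of $\tfrac{1}{k}\bone$ to the \emph{global} almost sure convergence $Y_n\to\tfrac{1}{k}\bone$ asserted by the theorem. A local Lyapunov function obtained from the Lyapunov equation on $\bone^{\perp}$, together with Theorem~\ref{Thm:AS-convergence}, only gives convergence on the event that the trajectory enters the basin of attraction; to get the unconditional statement you must either (i) show that $\tfrac{1}{k}\bone$ is the \emph{unique} equilibrium of $h$ on the simplex and that it is globally attracting for the ODE (e.g.\ by showing $F(y)=\bw(y)R/S_w(y)$ has no other fixed points, which does not follow from~\eqref{Cond:Stability} alone), or (ii) characterize all internally chain transitive sets of the flow and exclude the others via a non-convergence-to-unstable-sets argument. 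Your proposal lists these as options but carries out neither, so as written it proves a local, not a global, result. You should be aware that the paper's own proof has the same defect: it linearizes, invokes the Hartman--Grobman theorem to get a local conjugacy near $\tfrac{1}{k}\bone$, concludes local asymptotic stability of the ODE, and then simply asserts $y(t)\to\tfrac{1}{k}\bone$ a.s.\ without ruling out other limit sets and without explicitly transferring the ODE statement back to the discrete process $(Y_n)$. So your diagnosis of where the difficulty lies is accurate; neither your proposal nor the paper supplies the missing global argument, and completing the proof requires one of the two routes in (i)--(ii) above.
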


\section{Scaling Limits}\label{Main-results-CLT}
In this section, we will state the central limit theorems for $\left(Y_n\right)_{n\geq 0}$ and $\left(\tilde{Y}_n\right)_{n\geq 0}$.
Throughout this section we will consider the following two assumptions
\begin{enumerate}
\item[{\bf (A1)}] $w$ is a differentiable function.
\item[{\bf (A2)}] $Y_n$ converges almost surely to the uniform vector $\dfrac{1}{k}\bone.$
\end{enumerate}

We will again use the stochastic approximation method to obtain central limit theorems.
The rate of convergence of the discrete stochastic approximation process depends on the eigenvalues of the 
Jacobian matrix when evaluated at the limiting vector. 
For the ODE associated with $Y_n$, the Jacobian matrix of $h$ at the equilibrium point $\frac{1}{k} \bone$ is given by 

\begin{equation}\label{Equ:Jacobian}
\frac{\partial h(y)}{\partial y} = \frac{\partial}{\partial{y}} \dfrac{\bw(y)}{S_w(y)} \,R -I 
\end{equation}
where, 
\begin{align*}
\frac{\partial}{\partial{y}} \dfrac{\bw(y)}{S_w(y)} 
&=\begin{bmatrix}
\dfrac{w'(y_1)}{S_w(y)}-\dfrac{w(y_1)w'(y_1)}{S_w(y)^2} &-\dfrac{w(y_1)w'(y_2)}{S_w(y)^2} & \cdots&-\dfrac{w(y_1)w'(y_k)}{S_w(y)^2} \\ \\
-\dfrac{w(y_2)w'(y_1)}{S_w(y)^2} &\dfrac{w'(y_2)}{S_w(y)}-\dfrac{w(y_2)w'(y_2)}{S_w(y)^2} & \cdots&-\dfrac{w(y_2)w'(y_k)}{S_w(y)^2} \\ \\
\vdots	&\vdots&\ddots&\vdots\\ \\
-\dfrac{w(y_k)w'(y_1)}{S_w(y)^2} &-\dfrac{w(y_k)w'(y_2)}{S_w(y)^2} & \cdots&\dfrac{w'(y_k)}{S_w(y)}-\dfrac{w(y_k)w'(y_k)}{S_w(y)^2} 
\end{bmatrix} 
\end{align*}
That is, 
\begin{equation}
\frac{\partial \bw(y)/S_w(y)}{\partial y} = diag\left(\frac{\bw'(y)}{S_w(y)}\right)+ \left(\left(\dfrac{-w(y_i)w'(y_j)}{S_w(y)^2}\right)\right)_{i,j = 1,2,\cdots, k}. 
\end{equation}
Therefore

\begin{align}
\frac{\partial h(y)}{\partial y}\Big\vert_{y =\frac{1}{k}\bone} &= \left(bI -\frac{b}{k}J\right)R -I\nonumber \\
& = bR -\frac{b}{k}J -I \label{Jacobian-h}
\end{align}
where  $J\equiv J_k \equiv \bone ^T\bone$ and $I\equiv I_k $ is the $k\times k$ identity matrix and
\begin{equation} \label{Def:constant-b}
b \coloneqq \frac{w'\left(\frac{1}{k}\right)}{kw\left(\frac{1}{k}\right)}.
\end{equation}
Note that $b\leq 0$ as $w$ is a non-increasing function.
Now, since $R$ is a stochastic matrix, it has maximal eigenvalue $1$ and suppose the remaining $s$ distinct eigenvalues are $\lambda_1,\lambda_2,\cdots,\lambda_s$. 
By Perron Frobenius Theorem, the stochastic matrix $R$ has maximal eigenvalue $1$. 
That is the absolute real part of all eigenvalue of a stochastic matrix $R$ is less than $1$, so without loss we assume 
$1>\Re(\lambda_1)\geq \Re(\lambda_2)\geq \cdots \geq \Re(\lambda_s)\geq -1.$
Note that the right eigenvector corresponding to the maximal eigenvalue $1$ of $R$ is $\bone^T$ and 
\[ Dh\left(\dfrac{1}{k}\bone \right)\bone^T = \left(bI-\frac{b}{k}J\right)R\bone^T - \bone^T = -\bone^T\]
Thus, $-1$ is an eigenvalue of $Dh\left(\dfrac{1}{k}\bone \right)$. 
Now, for an eigenvalue $\lambda_i(\neq 1)$ of $R$, and the corresponding right eigenvector $v_i^T$ which is orthogonal to $\bone^T$, we have 
\[ Dh\left(\dfrac{1}{k}\bone \right)v_i^T = \left(bI-\frac{b}{k}J\right)R v_i^T - v_i^T = (b\lambda_i-1)v_i^T\]
Therefore the Jacobian matrix $Dh\left(\dfrac{1}{k} \bone\right) $ has eigenvalues $b \lambda_i-1$ for every $i = 1, \cdots, s$. 
Now define
\begin{equation}
\rho \coloneqq \max\{0,1-b \Re(\lambda_s)\}.
\label{Def:rho}
\end{equation}
%and $\nu$ to be the dimension of the eigenspace of the eigenvalue $\lambda_s$. 

\noindent
For the ODE associated to the colour count proportions $\tilde{Y}_n$,  
\begin{equation}
\frac{\partial \tilde{h}(\tilde{y})}{\partial \tilde{y}} = \frac{\partial }{\partial y}\frac{\bw\left(y \right)}{S_w\left(y\right)}R-I = \frac{\partial h(y) }{\partial y}. 
\end{equation}
Note that the $Dh\left(\dfrac{1}{k} \bone\right)$ is a diagonal matrix, if and only if,
\[ -b R_{i,j} +b/k = 0 \;\;\;\forall i \neq j \;\; \iff\;\;R = \dfrac{1}{k} J.\]
In fact, for this choice of $R$, we have 
\[ U_{n+1,i} =U_{0,i}+\frac{n+1}{k},\;\;\; \forall \;i =1,2,\cdots, k\]
so that 
\[ Y_{n+1,i} = \frac{U_{0,i}}{n+1}+\dfrac{1}{k}\]
Thus for any weight function $w$, we have
\[ Y_{n+1,i} \to \dfrac{1}{k}\;\;\;\text{ as } n \to \infty, \;\;\forall \;i. \]

We now use the CLT results obtained for general Jacobian matrix $Dh\left(\frac{1}{k}\bone\right)$, using stochastic approximation by Zhang \cite{Zhang2016}.
For the results stated in the next section, we recall that the exponential of a matrix $A$ is defined as, $ e^A \coloneqq \sum_{l=0}^\infty \dfrac{A^l}{l!}$
and for $x\in \mathbb{R}$ and a matrix $A$, $x^A$ is defined as $\exp\left((\log x)A \right)$.

\subsection{The case $\rho >1/2$}

\begin{theorem} \label{Thm:CLT-rho>1/2}
Suppose $w$ is a non-increasing function and $R$ is a doubly stochastic matrix 
such that $\rho >1/2$, then under assumptions {\bf (A1)} and {\bf (A2)},
\begin{equation} 
\sqrt{n} \left(Y_n - \frac{1}{k}\bone \right) \implies N(0, \Sigma_1) 
\end{equation}
and \begin{equation} 
\sqrt{n} \left(\tilde{Y}_n - \dfrac{1}{k} \bone \right) \implies N\left(0, \tilde \Sigma_1\right) 
\end{equation}
with
\begin{equation} \label{Def-Sigma}
\tilde{\Sigma}_1 = \frac{1}{k} \left[ \Lambda_1 - \frac{1}{k(1-2b)}J\right] \;\;\;\text{and }\;\;\; \Sigma_1 = R^T\tilde{\Sigma}_1R
\end{equation}
where $\Lambda_1$ is the unique solution of the Sylvester's equation (see \cite{RBhatia})
\begin{equation}\label{Equ:Sylvesters}
A\Lambda_1- \Lambda_1A^T = I
\end{equation}
for $A = \dfrac{1}{2}I -bR^T$. In particular, if $R$ is a normal matrix then 
\begin{equation}
\Sigma_1 = \frac{1}{k} \left[ R^T \left(I-b(R^T+R) \right)^{-1}R - \frac{1}{k(1-2b)}J\right]
\end{equation}
where $b$ is defined in equation \eqref{Def:constant-b}.
\end{theorem}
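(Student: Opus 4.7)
The plan is to cast the count-vector recursion \eqref{SA-Nn} into the framework of Zhang's stochastic approximation CLT \cite{Zhang2016} at the equilibrium $\frac{1}{k}\bone$, and then transfer the resulting fluctuation theorem to $Y_n$ through the linear identity \eqref{Equ:Un-Nn}. Three ingredients are needed for Zhang's theorem: smoothness of the drift, convergence of the conditional noise variance, and a spectral gap for the Jacobian. The first follows from \textbf{(A1)}, which makes $\tilde h$ continuously differentiable with Jacobian at $\frac{1}{k}\bone$ given by \eqref{Jacobian-h}; the chain-rule perturbation $\epsilon_n$ in \eqref{SA-Nn} satisfies $\|\epsilon_n\| = O(\|\delta_n\|) = O(1/n)$ by Lipschitz continuity of $\bw/S_w$, hence is asymptotically negligible and does not affect the $\sqrt n$ scaling.

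For the noise variance, $\chi_{n+1}$ is a one-hot multinomial with probability vector $p_n := \bw(Y_n)/S_w(Y_n)$, so the conditional covariance of $M_{n+1}$ is the multinomial covariance $\operatorname{diag}(p_n) - p_n^T p_n$; using \textbf{(A2)} together with \eqref{Equ:Un-Nn} one has $\tilde Y_n \to \frac{1}{k}\bone$ a.s., which forces this covariance to converge a.s.\ to $\Gamma := \frac{1}{k}(I - \frac{1}{k}J)$. For the spectral gap, the calculation preceding \eqref{Def:rho} identifies the eigenvalues of $D\tilde h(\frac{1}{k}\bone)$ as $-1$ (in the $\bone$ direction) and $b\lambda_i - 1$ on $\bone^\perp$; the hypothesis $\rho > 1/2$ is precisely the statement that each of these real parts is strictly less than $-1/2$, which is the spectral gap Zhang's theorem requires.

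Zhang's theorem then yields $\sqrt n(\tilde Y_n - \frac{1}{k}\bone) \implies N(0,\tilde\Sigma_1)$, where $\tilde\Sigma_1$ is the unique solution (subject to $\tilde\Sigma_1 \bone^T = 0$, forced by the simplex constraint $\tilde Y_n \bone^T \equiv 1$) of the Lyapunov equation $(D\tilde h + \frac{1}{2}I)\tilde\Sigma_1 + \tilde\Sigma_1(D\tilde h + \frac{1}{2}I)^T = -\Gamma$. Because $J$ acts trivially on $\bone^\perp$, the equation restricted to $\bone^\perp$ reduces to the Sylvester-type equation \eqref{Equ:Sylvesters} that defines $\Lambda_1$, with $A = \frac{1}{2}I - bR^T$; the overall factor $\frac{1}{k}$ comes from $\Gamma$. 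The correction $-\frac{1}{k(1-2b)}J$ is the rank-one piece required to enforce $\tilde\Sigma_1 \bone^T = 0$: since $A\bone^T = (\frac{1}{2}-b)\bone^T$, applying the Lyapunov operator to $\bone\bone^T$ produces $(1-2b)J$, so the scalar $\frac{1}{1-2b}$ is the unique one that cancels the $\bone$-component. The CLT for $Y_n$ follows by Slutsky from \eqref{Equ:Un-Nn}, which gives $Y_n - \frac{1}{k}\bone = (\tilde Y_n - \frac{1}{k}\bone)R + \delta_n$ with $\sqrt n\,\|\delta_n\| \to 0$ and $\frac{1}{k}\bone R = \frac{1}{k}\bone$, yielding $\Sigma_1 = R^T \tilde\Sigma_1 R$. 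For normal $R$, $R$ and $R^T$ commute and $R^T J R = J$, giving the closed form $\Lambda_1 = (I - b(R+R^T))^{-1}$ and the stated explicit $\Sigma_1$.

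The principal obstacle is this covariance identification: the Lyapunov operator is singular on $\bone$ because of the simplex constraint, so one must solve the reduced equation on $\bone^\perp$ and then reintroduce the $\bone$-projection with exactly the right scalar before the two pieces can be packaged into the Sylvester form stated in the theorem. A lesser but genuine point is verifying that the perturbation $\epsilon_n$ and the extension of $w$ as a constant function outside $[0,1]$ fit inside Zhang's hypotheses without modification; this is a routine check using the $O(1/n)$ bound on $\|\delta_n\|$ and the boundedness of the martingale increments established in \eqref{Bound-h-Martingale}.
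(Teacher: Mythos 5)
Your proposal is correct in substance and arrives at the stated formulas, but it takes a genuinely different route from the paper in two places. First, the paper applies Zhang's theorem \emph{twice}, once to the recursion \eqref{Urn-Recursion} for $Y_n$ (with noise $M_{n+1}R$ and $\Gamma_1 = R^T[\frac{1}{k}I-\frac{1}{k^2}J]R$) and once to the recursion \eqref{SA-Nn} for $\tilde Y_n$ (with noise $M_{n+1}$ and $\tilde\Gamma_1 = \frac{1}{k}I-\frac{1}{k^2}J$); you apply it only to $\tilde Y_n$ and transfer to $Y_n$ through $Y_n-\frac{1}{k}\bone=(\tilde Y_n-\frac{1}{k}\bone)R+\delta_n$ with $\sqrt{n}\,\|\delta_n\|\to 0$ and Slutsky. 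Your transfer is legitimate and arguably cleaner, since it makes the relation $\Sigma_1=R^T\tilde\Sigma_1R$ structural rather than an after-the-fact observation about two separately computed integrals. Second, the paper computes $\int_0^\infty e^{uH^T}\Gamma e^{uH}\,du$ explicitly, using $JR=RJ=J$ and the expansion $e^{-(bu/k)J}=I+\frac{e^{-bu}-1}{k}J$ to reduce the integrand to $e^{-u}\bigl[\frac{1}{k}e^{buR^T}e^{buR}-\frac{e^{2bu}}{k^2}J\bigr]$ and then integrates the two pieces separately (the $J$ piece giving $\frac{1}{k(1-2b)}J$, the other piece giving $\Lambda_1$ as a Sylvester solution); you instead solve the Lyapunov equation algebraically by splitting $\Gamma=\frac{1}{k}I-\frac{1}{k^2}J$ and matching the $J$-component via $A\bone^T=(\frac{1}{2}-b)\bone^T$. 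These are two ways of doing the same linear algebra and both buy the same closed form; the paper's version has the advantage of exhibiting $\Lambda_1$ directly as a convergent integral (hence existence), yours of avoiding the matrix-exponential manipulations. One caveat: your statement that ``the Lyapunov operator is singular on $\bone$'' is not correct. Under $\rho>1/2$ the matrix $Dh(\frac{1}{k}\bone)+\frac{1}{2}I$ is Hurwitz (its eigenvalues are $-\frac{1}{2}$ and $b\lambda_i-\frac{1}{2}$), so the Lyapunov equation has a unique solution with no side constraint; it is the noise covariance $\Gamma$ that is singular, and the condition $\tilde\Sigma_1\bone^T=0$ is an automatic consequence of uniqueness (since $\Gamma\bone^T=0$ and $\frac{1}{2}$ is not an eigenvalue of the Hurwitz matrix), not an extra normalization you must impose. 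This mischaracterization does not affect your computation, which still produces the unique solution, but the justification for the scalar $\frac{1}{1-2b}$ should be phrased as solving the $J$-component of the equation rather than as enforcing a constraint.
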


\begin{remark}
Note that for a P\'olya type urn, that is when $R = I$, assumption {\bf (A2)} holds and $\rho = 1-b >\frac{1}{2}$, therefore under assumption {\bf (A1)},
Theorem \ref{Thm:CLT-rho>1/2} holds with
\begin{equation}\label{Equ:Sigma-Polya}
\Sigma_1 = \frac{1}{k(1-2b)} \left [I-\frac{1}{k}J\right] =\frac{1}{1-2b} \Gamma,
\end{equation}
where $\Gamma = \dfrac{1}{k} I -\dfrac{1}{k^2} J$.
\end{remark}

\subsection{The case $\rho =1/2$}
Note that 
\[ \rho = \dfrac{1}{2} \iff\Re(\lambda_s) = \frac{kw\left(\frac{1}{k}\right)}{2w'\left(\frac{1}{k}\right)}\]
and since $\Re(\lambda_s) \geq -1 $ thus, $\rho = \dfrac{1}{2}$ case is possible only when $kw\left(\frac{1}{k}\right) \leq -2w'\left(\frac{1}{k}\right)$.\\
Let $\nu$ be the multiplicity of eigenvalue $\lambda_s$. 

\begin{theorem}\label{Thm:CLT-rho=1/2}
Let $w$ be a non-increasing, twice differentiable weight function such that $\rho =1/2$, then
under assumption {\bf (A2)}, 
\begin{equation}\label{Sigma-tilde}
\frac{\sqrt{n}}{(\log n)^{\nu-1/2}}\left( Y_n - \dfrac{1}{k}\bone \right)\implies N\left(0,\Sigma_2\right) 
\end{equation}
and 
\begin{equation}
\frac{\sqrt{n}}{(\log n)^{\nu-1/2}}\left( \tilde{Y}_n - \dfrac{1}{k}\bone \right)\implies N\left(0,\tilde{\Sigma}_2\right) 
\end{equation}
where
\begin{equation} 
\tilde \Sigma_2 = \frac{1}{k}\Lambda_2, \;\;\;\text{and} \;\;\;\; \Sigma_2 = R^T \tilde{\Sigma}_2 R, 
\end{equation}
and 
\begin{equation}
\Lambda_2 = \lim_{n \to \infty} \frac{1}{(\log n )^{2\nu-1}}\int_0^{\log n } e^{-u}e^{buR^T} e^{buR} du. 
\end{equation}
\end{theorem}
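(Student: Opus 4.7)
The plan is to apply the stochastic-approximation central limit theorem of Zhang \cite{Zhang2016} to the recursion \eqref{SA-Nn} for $\tilde Y_n$, in the critical regime where the linearized drift has eigenvalues on the line $\Re = -1/2$. I would first establish the CLT for $\tilde Y_n$ and then transfer it to $Y_n$ using $Y_n - \tfrac{1}{k}\bone = (\tilde Y_n - \tfrac{1}{k}\bone) R + \delta_n$ with $\|\delta_n\| = O(1/n)$; this error is $o\bigl(\sqrt{(\log n)^{2\nu-1}/n}\bigr)$, so the transfer is automatic and yields $\Sigma_2 = R^T \tilde\Sigma_2 R$.

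For the linearization, since $w$ is twice differentiable, Taylor expansion of $\tilde h$ at $\tfrac{1}{k}\bone$ has Jacobian $A^* = bR - \tfrac{b}{k}J - I$ (which, as noted after \eqref{Count-function}, agrees with $Dh(\tfrac{1}{k}\bone)$). The eigenvalues of $A^*$ are $-1$ in the $\bone$-direction (eliminated by the constraint $\tilde Y_n\cdot\bone = 1$) together with $\{b\lambda_i - 1\}$. Under $\rho = 1/2$, the eigenvalues of $A^*$ with real part exactly $-1/2$ are those corresponding to $\lambda = \lambda_s$, with total algebraic multiplicity $\nu$, and all others have strictly smaller real part. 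This is precisely the critical setting of Zhang's theorem, prescribing the scaling $\sqrt n/(\log n)^{\nu - 1/2}$.

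Next I would verify the hypotheses of Zhang's CLT. The martingale bound \eqref{Bound-h-Martingale} gives uniform $L^2$ control of $M_{n+1}$, and a direct computation from \eqref{prob-weight} shows that, on the event provided by assumption \textbf{(A2)},
\begin{equation*}
E\!\left[M_{n+1}^T M_{n+1}\,\big\vert\,\mathcal{F}_n\right] \longrightarrow \tfrac{1}{k}I - \tfrac{1}{k^2}J =: \Gamma \quad\text{a.s.}
\end{equation*}
The perturbation $\epsilon_n$ in \eqref{SA-Nn} is $O(\|\delta_n\|) = O(1/n)$ by the Lipschitz property of $\bw/S_w$ under \textbf{(A1)} together with positivity of $w(\tfrac{1}{k})$, and the Taylor remainder is $O(\|\tilde Y_n - \tfrac{1}{k}\bone\|^2)$. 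A preliminary $L^2$ bound $E\|\tilde Y_n - \tfrac{1}{k}\bone\|^2 = O((\log n)^{2\nu - 1}/n)$, obtained by a bootstrap from a crude moment estimate, ensures that both perturbations are negligible at the critical scale.

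Finally, the asymptotic covariance from Zhang's theorem takes the Lyapunov-integral form $\lim (\log n)^{-(2\nu - 1)} \int_0^{\log n} e^{uA^*}\Gamma e^{uA^{*T}}\,du$. Writing $A^* = -I + b(R - \tfrac{1}{k}J)$ and using the identities $\Gamma J = J\Gamma = 0$ (valid because $\Gamma = \tfrac{1}{k}(I - \tfrac{1}{k}J)$), the $J$-terms drop out and the scalar factor $\tfrac{1}{k}$ pulls out, reducing the integrand to $\tfrac{1}{k}\, e^{-u}\,e^{buR^T}\,e^{buR}$ on the orthogonal complement of $\bone$. This yields $\tilde\Sigma_2 = \tfrac{1}{k}\Lambda_2$ with $\Lambda_2$ exactly as stated. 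Existence of the limit $\Lambda_2$ is the main technical obstacle: one places $R$ in Jordan form on the $\lambda_s$-eigenspace and uses the asymptotic $\int_0^T e^{-u}u^{2\nu - 2}\,du \sim T^{2\nu - 1}/(2\nu - 1)$ to extract the correct scaling. The other delicate point is the $L^2$ bootstrap above, where the sharp $(\log n)^{2\nu - 1}/n$ rate is precisely what is needed to kill the quadratic Taylor remainder at the critical scaling.
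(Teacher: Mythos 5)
Your proposal is correct and follows essentially the same route as the paper: both rest on Zhang's stochastic-approximation CLT in the critical regime, verify the Lindeberg condition from the uniform bound on the martingale increments, identify $\Gamma = \tfrac{1}{k}I - \tfrac{1}{k^2}J$, and reduce the Lyapunov integral to $\Lambda_2$ using $RJ = JR = J$ and the fact that the residual $J$-contribution (of order $e^{-(1-2b)u}$) vanishes after dividing by $(\log n)^{2\nu-1}$. The only differences are cosmetic: the paper applies the appendix theorem to $Y_n$ and $\tilde Y_n$ separately (with $\Gamma_1 = R^T \Gamma R$ for the former) rather than transferring via $Y_n = \tilde Y_n R + \delta_n$, and your $L^2$ bootstrap is unnecessary overhead, since Zhang's theorem only requires the local Taylor expansion $h(y) = h(x^*) + (y-x^*)Dh(x^*) + o(\|y-x^*\|^{1+\epsilon})$, which twice differentiability of $w$ already supplies.
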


\subsection{The case $\rho <1/2$}
Note that 
\[ \rho < \dfrac{1}{2} \iff\Re(\lambda_s) < \frac{kw\left(\frac{1}{k}\right)}{2w'\left(\frac{1}{k}\right)}\]
and thus, $\rho < \dfrac{1}{2} $ case is not possible whenever $kw\left(\frac{1}{k}\right)>-2w'\left(\frac{1}{k}\right)$, which is true for sufficiently large $k$,
assuming that $w(0)$ and $w'(0)$ are both finite. Therefore for a negatively reinforced urn scheme, $\rho <\dfrac{1}{2}$ is a rare case, and
in this case we have the following convergence result.

\begin{theorem}\label{Thm:rho<1/2} Let $w$ be a non-increasing weight function
which is twice differentiable and $R$ be a doubly stochastic matrix, such that $0<\rho <1/2$,
then under assumption {\bf (A2)}, there are complex random variables $\xi_1, \cdots,\xi_s$ such that 
\begin{equation}
\frac{n^\rho}{\log n^{\nu-1}}\left( Y_n - \dfrac{1}{k}\bone \right) - X_n \longrightarrow 0 \;\;a.s.
\end{equation}
where,\[X_n = \sum_{i: \Re(\lambda_i) = (1-\rho)/b }e^{-i(1-bIm(\lambda_i)\log n }\xi_i v_i\]
and $v_i$ is a right eigenvector of $Dh\left(\dfrac{1}{k} \bone\right)$ with respect to the eigenvalue $b\lambda_i -1$. 
\end{theorem}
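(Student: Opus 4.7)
The plan is to apply Zhang's stochastic approximation theorem \cite{Zhang2016} in the regime where the Jacobian at the fixed point has slow modes, i.e., eigenvalues whose real parts are strictly larger than $-1/2$. Section \ref{SAalgorithm} already writes $(Y_n)$ as the SA recursion \eqref{Urn-Recursion} with step size $\gamma_n = 1/(n+1)$, Lipschitz drift $h$, and uniformly $L^2$-bounded martingale-difference noise $M_{n+1}R$; assumption {\bf (A2)} provides the a.s.\ convergence $Y_n \to \frac{1}{k}\bone$, so the behaviour is controlled by the linearization of $h$ at this point, namely $A := Dh(\tfrac{1}{k}\bone) = bR - \tfrac{b}{k}J - I$ from \eqref{Jacobian-h}.

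First I would record the spectral structure of $A$, already worked out in the discussion preceding \eqref{Def:rho}: the eigenvalue of $A$ on $\bone^T$ is $-1$, and every right eigenvector $v_i^T$ of $R$ associated with $\lambda_i \neq 1$ (necessarily orthogonal to $\bone$, so $J v_i^T = 0$) is also an eigenvector of $A$ with eigenvalue $b\lambda_i - 1$. The condition $\rho < 1/2$ means exactly that the slowest mode satisfies $\Re(b\lambda_s - 1) = -\rho > -1/2$; the critical eigenvalues that will drive the leading-order behaviour are precisely those with $\Re(\lambda_i) = (1-\rho)/b$.

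Writing $Z_n := Y_n - \tfrac{1}{k}\bone$, the recursion reads
\begin{equation*}
Z_{n+1} = (I + \gamma_{n+1} A)Z_n + \gamma_{n+1} r_n + \gamma_{n+1} M_{n+1} R,
\end{equation*}
with Taylor remainder $r_n = O(\|Z_n\|^2)$ controlled using the twice-differentiability of $w$. Projecting onto the invariant subspace of each critical eigenvalue and iterating produces linear factors whose product is comparable to $n^{b\lambda_i - 1}$, with Jordan blocks of size $\nu$ generating the polynomial correction $(\log n)^{\nu - 1}$ that appears in the scaling and the imaginary part of $\lambda_i$ generating the rotation factor $e^{-i(1-b\Im\lambda_i)\log n}$. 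The stochastic part becomes, after summation by parts, a martingale whose quadratic variation is, up to constants, $\sum_m \gamma_{m+1}^2 m^{2\rho} < \infty$ precisely because $\rho < 1/2$; by the martingale convergence theorem this yields the a.s.\ complex limits $\xi_i$ for each critical index $i$. Non-critical projections decay strictly faster than $n^{-\rho}(\log n)^{\nu-1}$ and become negligible after the rescaling, while $r_n$ is absorbed by a standard bootstrap argument based on {\bf (A2)}. Combining the contributions of the critical modes gives $X_n$ in the stated form.

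The principal obstacle I anticipate is the careful verification of Zhang's hypotheses in the present setting: controlling the Taylor remainder $r_n$ uniformly on a shrinking neighbourhood of $\tfrac{1}{k}\bone$ (which is why the strengthened twice-differentiability assumption on $w$ is imposed rather than just {\bf (A1)}), establishing the uniform-in-$n$ bounds on the martingale-noise covariance required by the theorem, and handling the Jordan-block book-keeping so that multiplicity-$\nu$ critical eigenvalues produce the exact $(\log n)^{\nu-1}$ prefactor in the scaling without spurious lower-order terms surviving inside $X_n$.
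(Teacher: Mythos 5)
Your proposal is correct and takes essentially the same route as the paper: the paper's entire proof is a one-line appeal to the $0<\rho<1/2$ case of Zhang's stochastic approximation theorem (Theorem \ref{Appn:CLT-rho} in the appendix), applied to the recursion \eqref{Urn-Recursion} with the Jacobian $Dh(\tfrac{1}{k}\bone)=bR-\tfrac{b}{k}J-I$ and its eigenvalues $b\lambda_i-1$. Your additional verification of the hypotheses (bounded martingale noise, the covariance limit, and the Taylor remainder via twice differentiability of $w$) and your sketch of the internal mechanism of Zhang's result go beyond what the paper records, but they are consistent with it.
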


\section{Technical Results} \label{Sec:Technical-Results}
Since we study the convergence of $\left(Y_n\right)_{n\geq 0}$ and $\left(\tilde{Y}_n\right)_{n\geq 0}$  through the stochastic approximation method,
we consider the equilibrium points of $h$ as possible limit points. Now, as mentioned earlier, 
a unique equilibrium point is guaranteed assuming that $F$  is a contraction.
In the next Proposition, we obtain sufficient conditions under which $F$ is a contraction map.
\begin{proposition}
Suppose $w$ is a non-increasing $Lip(M)$ function, then $F$ is a contraction 
whenever 
\begin{enumerate}
\item[(i)] $w(1)>0$ and $ \sqrt{k}>\dfrac{2M}{w(1)}$; or
\item[(ii)] $w$ is a convex function and $ \sqrt{k}w(1/k)>2M$.
\end{enumerate}
\label{Prop:F-contraction-sufficient-condition}
\end{proposition}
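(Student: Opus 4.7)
The plan is to show that the Lipschitz constant of $F$ in the Euclidean norm is at most $\frac{2M}{\sqrt{k}\,w(1)}$ under (i) and $\frac{2M}{\sqrt{k}\,w(1/k)}$ under (ii); in each case the stated hypothesis is exactly what makes this constant strictly less than $1$. The three ingredients are the scalar $\mathrm{Lip}(M)$ property of $w$, the fact that row-stochastic matrices are non-expansive in the $\ell^1$ norm, and a uniform lower bound on the denominator $S_w(y)$.

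First, I would split the difference of $G(y)\coloneqq \bw(y)/S_w(y)$ in the customary way:
\[
G(y)-G(y') \;=\; \frac{\bw(y)-\bw(y')}{S_w(y)} \;+\; \bw(y')\,\frac{S_w(y')-S_w(y)}{S_w(y)\,S_w(y')}.
\]
The scalar Lipschitz bound on $w$ gives $\|\bw(y)-\bw(y')\|_1\leq M\|y-y'\|_1$ and $|S_w(y)-S_w(y')|\leq M\|y-y'\|_1$, while $\|\bw(y')\|_1=S_w(y')$ since $w\geq 0$. Plugging these in,
\[
\|G(y)-G(y')\|_1 \;\leq\; \frac{2M}{S_w(y)}\,\|y-y'\|_1.
\]

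Second, since $R$ is row-stochastic, a direct computation gives $\|zR\|_1\leq \|z\|_1$ for every row vector $z$; combining with the standard norm equivalences $\|x\|_2\leq \|x\|_1\leq \sqrt{k}\,\|x\|_2$ yields
\[
\|F(y)-F(y')\|_2 \;\leq\; \|(G(y)-G(y'))R\|_1 \;\leq\; \frac{2M\sqrt{k}}{S_w(y)}\,\|y-y'\|_2.
\]

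Third, I would produce the required uniform lower bound on $S_w(y)$ in the two cases. For (i), the monotonicity of $w$ (together with its extension by $w(1)$ on $[1,\infty)$) gives $w(y_i)\geq w(1)$ for every $i$, so $S_w(y)\geq k\,w(1)$, and the Lipschitz constant of $F$ is at most $\frac{2M}{\sqrt{k}\,w(1)}<1$ by hypothesis. For (ii), the iterates $(Y_n)$ and the associated ODE flow live on the probability simplex; on that set Jensen's inequality applied to the convex $w$ gives
\[
S_w(y) \;=\; \sum_{i=1}^k w(y_i) \;\geq\; k\,w\!\left(\tfrac{1}{k}\sum_i y_i\right) \;=\; k\,w(1/k),
\]
which again makes $F$ a strict contraction under $\sqrt{k}\,w(1/k)>2M$.

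The only delicate point, and the place I expect mild friction, is the domain issue in (ii): the constant extension of $w$ outside $[0,1]$ need not preserve convexity, so Jensen's inequality only furnishes the lower bound on the simplex. This is harmless because the urn dynamics never leave the simplex, and the equilibrium analysis elsewhere in the paper only needs $F$ to be contracting on that invariant set; nevertheless I would state the restriction explicitly to avoid ambiguity.
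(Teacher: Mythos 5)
Your proof is correct, and it follows the same basic strategy as the paper's: split the difference of $\bw(y)/S_w(y)$ into a ``numerator'' term and a ``denominator'' term, control both by the scalar $Lip(M)$ property together with $\|\bw(y')\|\leq S_w(y')$, and then lower-bound $S_w$ by $kw(1)$ in case (i) and by $kw(1/k)$ via Jensen on the simplex in case (ii). The difference is in the norm bookkeeping. The paper works in the Euclidean norm throughout, obtains the intermediate constant $M(1+\sqrt{k})/S_w(y)$ (slightly sharper than your $2M\sqrt{k}/S_w(y)$, though both collapse to the same stated conditions after $1+\sqrt{k}\leq 2\sqrt{k}$), and --- importantly --- simply drops the factor $R$ in its first line, writing $\|F(x)-F(y)\|=\|\bw(x)/S_w(x)-\bw(y)/S_w(y)\|$. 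That step is not innocuous: the Euclidean operator norm of a merely row-stochastic matrix can exceed $1$ (e.g.\ a $2\times 2$ matrix with two identical rows sends $(1,1)$ to $(2,0)$), and the restriction to mean-zero vectors does not rescue it. Your detour through $\ell^1$, where $\|zR\|_1\leq\|z\|_1$ holds for every row-stochastic $R$, followed by the conversions $\|x\|_2\leq\|x\|_1\leq\sqrt{k}\|x\|_2$, handles $R$ correctly and so actually patches this small gap at the cost of a marginally worse constant. Your closing remark about case (ii) --- that the Jensen bound, and hence the contraction estimate, is only available on the probability simplex because the constant extension of $w$ outside $[0,1]$ need not be convex --- is also accurate; the paper states the bound $S_w(y)\geq kw(1/k)$ only for $y$ in the simplex and relies implicitly on the same invariance of the dynamics, so you are right to flag it explicitly.
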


\begin{remark}
If $w$ is a non-increasing convex weight function and $w(0)<\infty$, then 
$F$ is a contraction whenever $ \sqrt{k}>\dfrac{4M}{w(0)}$, for $k$ sufficiently large such that $ w(1/k)>w(0)/2$.
\end{remark}

\begin{proof}
% [Proof of Proposition \ref{Prop:F-contraction-sufficient-condition} ] 
\begin{align*}
\|F(x)-F(y)\| &= \left\|\frac{\bw(x)}{S_w(x)} -\frac{\bw(y)}{S_w(y)} \right\| \\
&=\left \|\frac{\bw(x)S_w(y) - \bw(y)S_w(x)}{S_w(x)S_w(y)}\right\| \\
&= \left\|\frac{\left(S_w(y)-S_w(x)\right)\bw(x)-S_w(x)\left(\bw(y)-\bw(x)\right)}{S_w(x)S_w(y)}\right\| \\
&\leq\frac{|S_w(y)-S_w(x)|\,\, \left\|\bw(x)\|\, +\,S_w(x)\,\,\|\bw(y)-\bw(x)\right\|}{S_w(x)S_w(y)}
\end{align*}
Note that
\[\|\bw(x)\|^2 = \sum_{i=1}^k \left|w(x_i)\right|^2 \leq \left(\sum_{i=1}^k w(x_i)\right)^2= S_w(x)^2\]
\begin{equation}
\implies \|\bw(x)\| \leq S_w(x).
\end{equation}
Therefore,
\begin{equation}\label{Lip-Bound}
\|F(x)-F(y)\|\leq \frac{|S_w(y)-S_w(x)|+\|\left(\bw(x)-\bw(y)\right)\|}{S_w(y)} 
\end{equation}
Now since $w$ is a $Lip(M)$ function we get
\begin{align}
\|\bw(x)-\bw(y)\|^2 &= \sum_{i=1}^k | w(x_i)-w(y_i)|^2 \nonumber\\
&\leq M^2\sum_{i=1}^k | x_i-y_i|^2 = M^2\|x-y\|^2 \label{Ineq:Bound1}
\end{align}
and 
\begin{align}
|S_w(y)-S_w(x)|&= \left|\sum_{j=1}^k w(y_j) - w(x_j)\right| \nonumber\\
& \leq \sum_{j=1}^k |w(y_j) - w(x_j)| \nonumber\\
&\leq M \sum_{j=1}^k |y_j-x_j| = M\|y-x\|_1 \nonumber\\
% \;\;\;\;\text{(since $w$ is $Lip(M)$)}\\
&\leq M\sqrt{k}\|x-y\| \label {Ineq:Bound2}
\end{align}
The last inequality follows by Cauchy-Schwartz inequality. Finally from equations \eqref{Ineq:Bound1}, \eqref{Ineq:Bound2} and \eqref{Lip-Bound}, we get
\begin{equation}\label{Bound-Contraction}
\|F(x)-F(y)\| \leq \frac{M\left(1+\sqrt{k}\right)}{S_w(y)}\|x-y\|
\end{equation}
{\bf Case (i):}
If $w(1)>0$, then we can write $S_w(y)\geq kw(1)$ and therefore from equation \eqref{Bound-Contraction} we get
\begin{equation}
\|F(x)-F(y)\| \leq \frac{M\left(1+\sqrt{k}\right)}{kw(1)}\|x-y\| \leq \frac{2M}{\sqrt{k}w(1)}\|x-y\|
\end{equation}
Thus $F$ is a contraction if $\sqrt{k}>\dfrac{2M}{ w(1)}$. \\
{\bf Case (ii):}
Assuming that $w$ is a convex function then for $\Delta_k = \{ y\in \mathbb{R}^{+k}: \sum_{i=1}^k y_i =1\}$
\[S_w(y) \geq kw(1/k), \; \forall y\in \Delta_k. \]
Therefore from equation \eqref{Bound-Contraction} we get 
\begin{equation}\label{Bound-Contraction-Convex}
\|F(x)-F(y)\| \leq \frac{M\left(1+\sqrt{k}\right)}{kw(1/k)}\|x-y\| \leq \frac{2M}{\sqrt{k}w(1/k)}\|x-y\|
\end{equation} 
Thus $F$ is a contraction if $\sqrt{k}w(1/k)>2M$. 
\end{proof}

The following proposition gives the sufficient condition for the stability of the equilibrium point $\dfrac{1}{k}\bone$, in case of a 
doubly stochastic replacement matrix.

\begin{proposition}\label{Prop-Stability}
Suppose $w$ is a non-increasing function on $[0,1]$ and $R$ is doubly stochastic matrix with eigenvalues $1, \lambda_1,\cdots,\lambda_s$ 
then, $\dfrac{1}{k} \bone$ is a stable equilibrium if 
\begin{equation}
\Re(\lambda_i)> \dfrac{kw\left(\dfrac{1}{k}\right)}{w'\left(\dfrac{1}{k}\right)}, \;\;\forall i =1,2,\cdots, s. 
\label{stability-condition-1}
\end{equation}
\label{Prop:Nonlinear-Stability-sufficient-condition}
\end{proposition}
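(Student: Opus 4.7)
The plan is to invoke the standard Hartman--Grobman / linearization criterion: an equilibrium of a smooth ODE is (locally) asymptotically stable whenever every eigenvalue of the Jacobian at that point has strictly negative real part. Since the excerpt has already computed the Jacobian $Dh\!\left(\tfrac{1}{k}\bone\right) = bR - \tfrac{b}{k}J - I$ and identified its full spectrum, nearly all the work is already done; what remains is to translate the spectral condition into the inequality stated in the proposition.

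First I would recall, from the eigenvalue computation preceding the proposition, that $Dh\!\left(\tfrac{1}{k}\bone\right)$ has eigenvalues $-1$ (corresponding to the Perron eigenvector $\bone^{T}$ of $R$) together with $b\lambda_i - 1$ for $i=1,\ldots,s$, where $b = w'(1/k)/(kw(1/k))$. The eigenvalue $-1$ poses no issue, so stability reduces to showing $\Re(b\lambda_i - 1) < 0$ for every $i$, i.e.\ $b\,\Re(\lambda_i) < 1$.

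Next I would exploit the sign of $b$. Since $w$ is non-increasing on $[0,1]$ and $w(1/k) > 0$, we have $w'(1/k) \leq 0$, hence $b \leq 0$. In the generic case $b < 0$, dividing the inequality $b\,\Re(\lambda_i) < 1$ by $b$ reverses the sense and yields exactly
\[
\Re(\lambda_i) \;>\; \frac{1}{b} \;=\; \frac{kw(1/k)}{w'(1/k)},
\]
which is the hypothesis \eqref{stability-condition-1}. In the degenerate case $b=0$ (flat weight at $1/k$), every nontrivial eigenvalue collapses to $-1$ and stability is automatic, consistent with the convention that the right-hand side of \eqref{stability-condition-1} is $-\infty$.

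Finally I would invoke the classical linearization theorem for $C^1$ vector fields (see e.g.\ the references to Bena\"{\i}m, Kushner--Clark, Borkar cited earlier) to conclude that $\tfrac{1}{k}\bone$ is a locally asymptotically stable equilibrium of $\dot{y} = h(y)$. The only potential subtlety, and the step I would watch most carefully, is the sign flip when $b<0$: one must verify that the hypothesis of the proposition really does cover all cases uniformly in $i$, including the eigenvalue $\lambda_s$ with smallest real part, and that the Jacobian is well-defined under assumption (A1) so that the linearization theorem applies. Everything else is a direct reading-off from the spectral computation already carried out above.
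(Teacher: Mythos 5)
Your proposal follows essentially the same route as the paper's own proof: read off the spectrum $\{-1\} \cup \{b\lambda_i - 1\}$ of the Jacobian at $\tfrac{1}{k}\bone$, require all real parts to be negative, and convert $b\,\Re(\lambda_i) < 1$ into $\Re(\lambda_i) > 1/b$ using $b \leq 0$. You are in fact slightly more careful than the paper, which silently performs the sign-reversing division by $b$ and does not mention the degenerate case $b = 0$; your treatment of both points is correct and welcome.
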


\begin{proof}
Since the eigenvalues of the Jacobian matrix are $-1$ and $ b\lambda_i-1$ for $i = 1,2,\cdots, s$.
Thus equilibrium point $\dfrac{1}{k} \bone$ is stable, if only if
\begin{equation}
\Re(b\lambda_i-1)<0, \;\; \forall\; i=1, 2,\cdots, s. 
\end{equation}
$\iff $
\begin{equation}\label{stability-condition}
\Re(\lambda_s)> \frac{1}{b} = \frac{kw\left(\dfrac{1}{k}\right)}{w'\left(\dfrac{1}{k}\right)}.
\end{equation} 
This completes the proof.
\end{proof}

\begin{corollary}\label{Cor:Suffient-condition}
Notice that $\Re(\lambda_s) \geq -1$. Thus another sufficient condition for the stability is 
\begin{equation}
k>-\dfrac{w'\left(\dfrac{1}{k}\right)}{w\left(\dfrac{1}{k}\right)}.
\label{stability-condition-2}
\end{equation}
\end{corollary}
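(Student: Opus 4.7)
The plan is to reduce the desired statement to an inequality that can be checked directly from the statement of Proposition \ref{Prop:Nonlinear-Stability-sufficient-condition} without ever touching the spectrum of $R$. The engine of the argument is the elementary fact that if a number $\Re(\lambda_s)$ is bounded below by some constant $c$, then any condition of the form ``$\Re(\lambda_s)>\alpha$'' holds automatically as soon as $c\geq \alpha$.

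First I would recall that $R$ is doubly stochastic, hence in particular stochastic, so by Perron--Frobenius every eigenvalue satisfies $|\lambda_i|\leq 1$; in particular $\Re(\lambda_s)\geq -1$. Combining this with the stability criterion of Proposition \ref{Prop:Nonlinear-Stability-sufficient-condition}, it suffices to exhibit a condition on $k$ and $w$ under which
\[
-1 \;>\; \frac{kw\!\left(\tfrac{1}{k}\right)}{w'\!\left(\tfrac{1}{k}\right)},
\]
because then the lower bound $\Re(\lambda_s)\geq -1$ already forces $\Re(\lambda_s)$ to exceed the right-hand side of \eqref{stability-condition-1}, for every eigenvalue $\lambda_i$.

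Next I would rearrange this inequality. Here the only point requiring care is a sign: since $w$ is non-increasing we have $w'(1/k)\leq 0$, and we may assume $w'(1/k)<0$ (otherwise the quantity $\dfrac{kw(1/k)}{w'(1/k)}$ is $-\infty$ and the claim is vacuous), while $w(1/k)>0$. Multiplying both sides of the displayed inequality by the negative number $w'(1/k)$ reverses the inequality, giving $-w'(1/k) < k\,w(1/k)$, and dividing by the positive quantity $w(1/k)$ yields exactly
\[
k \;>\; -\frac{w'\!\left(\tfrac{1}{k}\right)}{w\!\left(\tfrac{1}{k}\right)},
\]
which is \eqref{stability-condition-2}. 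Thus the corollary follows from Proposition \ref{Prop:Nonlinear-Stability-sufficient-condition} together with the spectral bound $\Re(\lambda_s)\geq -1$.

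There is essentially no obstacle in this argument; the only subtlety is tracking the inequality direction through the division by the non-positive quantity $w'(1/k)$, and observing that one needs $w'(1/k)<0$ (i.e., $w$ is strictly decreasing at $1/k$) for the stated condition to be a genuine restriction rather than a vacuous one.
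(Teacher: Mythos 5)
Your argument is correct and is exactly the reasoning the paper intends (the corollary is stated without proof as an immediate consequence of Proposition \ref{Prop:Nonlinear-Stability-sufficient-condition} and the bound $\Re(\lambda_s)\geq -1$): you reduce to $-1>\frac{kw(1/k)}{w'(1/k)}$ and track the sign flip when multiplying by $w'(1/k)<0$. The only quibble is your parenthetical about the case $w'(1/k)=0$: there the condition $k>-w'(1/k)/w(1/k)=0$ is trivially satisfied rather than vacuous, and stability still holds since $b=0$ makes every eigenvalue of the Jacobian equal to $-1$; this does not affect the main argument.
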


\begin{remark}
Assuming that $w(0),w'(0+)<\infty$, equation \eqref{stability-condition-1} or \eqref{stability-condition-2} hold for $k$ sufficiently large.
\end{remark}
\noindent

\section{Proofs} \label{Section-Proofs}

\begin{proof}[Proof of Theorem \ref{Thm:As-Convergence}]
Suppose $F$ is a contraction, then there exists a unique fixed point $y^*$ of $F$, such that $F(y^*)=y^*$.
Then 
\[ h(y^*) = 0.\]
that is, $y^*$ is also a unique equilibrium. Now using Theorem $2.$ and Corollary $3.$ from \cite{Borkar2008} (page 126) we get 
\[Y_n \to y^*,\;\;\;\text{as }n \to \infty.\] 
Now if $ \tilde{y}^*$ is an equilibrium point of the ODE in equation \eqref{ODE-Nn} that is $\tilde{y}^*$ satisfies 
\[ \tilde{y}^* = \frac{\bw(\tilde{y}^*R)}{S_w( \tilde{y}^*R)} \]
then
\[\tilde{Y}_n \to \tilde{y}^*,\;\;\;\text{as }n \to \infty.\] 
\end{proof}

\begin{proof}[Proof of Theorem \ref{Thm:DS-SLLN}]
We consider the linearized version of the non-linear ODE associated with $Y_n$, that is 
$\dot{y} = h\left(y\right)$, as in equation \eqref{ODE-Yn} around its equilibrium point 
$\dfrac{1}{k} \bone$, that is 
\begin{equation}
\dot{x} = \frac{\partial h }{\partial x} \left(\frac{1}{k}\bone \right) = Hx
\label{Linearized-ODE}
\end{equation}
where $H = bR-\dfrac{b}{k} J -I$. Assuming that $\lambda \neq \frac{1}{b}$ for any eigenvalue $\lambda$ of $R$, then 
$\left(I-bR\right)$ is invertible and then $x^* = \dfrac{-b}{k}\bone \left(I-bR\right)^{-1}$ is the unique equilibrium 
point of the linearized differential equation \eqref{Linearized-ODE}. By Hartman-Grobman Theorem (see \cite{Hartman1964} Chapter 9 and \cite{Grobman1962})
there exists a homeomorphism $f$ from a neighbourhood $U$ of $\frac{1}{k}\bone$ to a neighbourhood $V$ 
of $x^*$, such that $x(0) = f\left(y(0)\right)$ implies 
$x(t) = f\left(y(t)\right) \,\, \forall t>0$, where $x(t)$ is the solution of the linearized
ODE, and $y(t)$ is the solution of the non-linear ODE .
In particular, if the real part of all the eigenvalues of $H$ are
positive then $\frac{1}{k}\bone$ is asymptotically stable, that is $x(t)\to x^*$ almost surely and thus assuming the stability of $\dfrac{1}{k}\bone$ 
that is, if equation \eqref{stability-condition-1} holds then we get 
\[y(t) \to \frac{1}{k} \bone\,\,a.s.\]
This completes the proof for $Y_n$. This also proves  the convergence for $\tilde{Y}_n$, since the Jacobian matrices
for $h$ and $\tilde{h}$ at $\frac{1}{k} \bone$ are same.
\end{proof}

\begin{proof}[Proof of Theorem \ref{Thm:CLT-rho>1/2}]
Suppose $\rho>1/2$ then (see appendix Theorem \ref{Appn:CLT-rho}), 
\[\sqrt{n}\left(Y_n - \dfrac{1}{k} \bone\right) \implies N_k\left(0,\Sigma_1\right)\]
where 
% \[\Sigma = \frac{1}{k}R^T\left[ X- \frac{1}{k(1-2b)}J\right] R\]
% From Appendix we get
\[\Sigma_1 = \int_{0}^{\infty} \left(e^{Hu} \right)^T \mathbf \Gamma_1 \left(e^{ Hu} \right) du,\]
with 
\[H = \left(bI - \dfrac{b}{k} J\right)R -\frac{1}{2} I.\]
and \begin{align*}
\Gamma_1 & = \lim_{n \to \infty} R^TE\left[ M_{n+1}^T M_{n+1}\Big\vert\mathcal{F}_n \right]R\\
& = \lim_{n \to \infty} R^TE\left[\left(\chi_{n+1}- \frac{\bw(Y_n)}{S_w(Y_n)} \right)^T \left(\chi_{n+1}- \frac{\bw(Y_n)}{S_w(Y_n)} \right) \Big\vert\mathcal{F}_n \right]R\\
& = \lim_{n \to \infty} R^T\left[E\left[\chi_{n+1}^T\chi_{n+1}\Big\vert\mathcal{F}_n \right] - \frac{\bw(Y_n)^T \bw(Y_n)}{S_w(Y_n)^2} \right]R\\
& = R^T\left[\frac{1}{k}I- \frac{1}{k^2} J\right]R.
\end{align*}
Now observe that $JR = RJ = J$, because $R$ is doubly stochastic. Therefore 
\begin{align*}
e^{uH} = e^ { bu R - \frac{bu}{k} JR-\frac{u}{2} I} = e^{bu R - \frac{bu}{k} J-\frac{u}{2} I}
\end{align*}
Again since $R$ commutes with $J$ and $I$, we can write 
\begin{align}
e^{uH} & = e^{buR} e^{- (bu/k) J} e^{-(u/2) I} \nonumber \\
% & = e^{-u/2}\exp\left(- \dfrac{bu}{k} J\right) e^{ bu R } \nonumber \\
& = e^{-u/2}\left[ \sum_{j=0}^\infty\frac{\left(\dfrac{-bu}{k} J\right)^j}{j!} \right] e^{ bu R } \nonumber \\
%  & = e^{-u/2}\left[I+ \sum_{j=1}^\infty \left(\dfrac{-bu}{k} \right)^j \dfrac{ K^{j-1}J}{j!} \right]e^{ bu R }\nonumber \\ 
% & = e^{-u/2}\left[ I + \frac{1}{k}\sum_{j=0}^\infty \dfrac{(-bu)^j }{j!} J\right]e^{ bu R } \nonumber \\
&= e^{-u/2}\left[ I + \frac{e^{-bu}-1}{k} J \right] e^{ bu R }
\end{align}
Now 
\begin{align}
e^{uH^T} \Gamma_1& = e^{-u/2}e^{ bu R^T }\left[ I + \frac{e^{-bu}-1}{k} J \right] R^T\left[\dfrac{1}{k}I- \frac{1}{k^2} J\right]R \nonumber \\
% & = e^{-u/2}e^{ bu R^T }\left[ I + \frac{e^{-bu}-1}{k} J \right] R^T\left[\dfrac{1}{k}R- \frac{1}{k^2} J\right]\nonumber \\
& = e^{-u/2}e^{ bu R^T }\left[ R^T + \frac{e^{-bu}-1}{k} J \right] \left[\dfrac{1}{k}R- \frac{1}{k^2} J\right] \nonumber \\
& = e^{-u/2}e^{ bu R^T }\left[ \dfrac{1}{k}R^TR - \frac{1}{k^2} J\right] 
  \end{align}
  
\begin{align}
e^{uH^T} \Gamma_1 e^{uH} & = e^{-u}e^{ bu R^T }\left[ \dfrac{1}{k}R^TR - \frac{1}{k^2} J\right ] \left[ I + \frac{e^{-bu}-1}{k} J \right]e^{ bu R } \nonumber \\
& = e^{-u}e^{ bu R^T }\left[ \dfrac{1}{k}R^TR - \frac{1}{k^2} J \right]e^{ bu R } \nonumber \\
& = e^{-u}\left[ \dfrac{1}{k}e^{ bu R^T } R^TR e^{ bu R }- \frac{e^{2bu}}{k^2} J \right] \nonumber \\
& = e^{-u}R^T\left[\frac{1}{k} e^{ bu R^T } e^{ bu R }- \frac{e^{2bu}}{k^2} J \right]R \label{Integral-expression}
\end{align}
The last step follows as $R$ and $e^{buR}$ commute.
Now we can rewrite the last expression as
\begin{align}
&= \dfrac{1}{k}R^T\left[ e^{-u} e^{ bu R^T } e^{ bu R }- \frac{e^{-u(1-2b)}}{k} J\right]R \nonumber \\
&= \dfrac{1}{k}R^T\left[ e^{-\frac{u}{2}( I -2b R^T)}e^{\frac{u}{2}(2bR-I)} - \frac{e^{-u(1-2b)}}{k}J \right]R \label{Equ:Var-Integrand}
% &= \left( \exp \left(-\frac{u}{2}( I -2b R^T) \right ) - \frac{e^{-u(1-2b)/2}}{k} J \right) \left( \exp \left(-\frac{u}{2}(2b R-I) \right ) + \frac{e^{-u(1-2b)/2}}{k} J \right)
\end{align}
Thus,
\begin{equation}
\int_0^\infty e^{uH^T} \Gamma_1 e^{uH} du =\frac{1}{k}R^T\left[ \Lambda_1- \frac{1}{k(1-2b)}J\right] R
\end{equation}

\noindent
where 
\begin{equation}
\Lambda_1= \int_0^\infty \exp \left(-u(1/2 I -b R^T) \right ) \exp \left(-u(bR-1/2I) \right ) du\\
\end{equation}
which satisfies the Sylvesters equation :
\begin{equation}\label{Equ:Sylv}
A \Lambda_1- \Lambda_1 B = I. 
\end{equation}
for $A = \frac{1}{2} I -b R^T$ and $B = bR - \frac{1}{2}I= -A^T$.
In particular, if $R$ is a normal matrix then 
\[\Lambda_1 = (A-B)^{-1} = (I-b(R+R^T))^{-1} \]
satisfies the Sylvesters equation, if and only if 
\begin{alignat*}{2}
&I=A(A-B)^{-1} - (A-B)^{-1}B \\
\iff &A-B = (A-B)A(A-B)^{-1} - B \\
\iff& A = (A-B)A(A-B)^{-1} \\
\iff &A(A-B) =(A-B)A \\
\iff &AB = BA \\
\iff &AA^T= A^TA \\
\iff & R^TR=RR^T.
\end{alignat*}
Therefore for a normal matrix $R$ 
\[\Sigma_1 = \frac{1}{k}R^T\left[ (I-b(R+R^T))^{-1} - \frac{1}{k(1-2b)}J\right] R.\]
Similarly, if $\rho >1/2$  we get
\[\sqrt{n}\left(\tilde{Y}_n - \frac{1}{k}\bone\right) \implies N(0,\tilde{\Sigma}_1)\]
where 
\[\tilde{\Sigma}_1 = \int_0^\infty \left(e^{uH} \right)^T \tilde \Gamma_1 \left(e^{ uH} \right) du,\]
for
\[H = \frac{\partial \tilde h}{\partial y}\Big\vert_{y=\frac{1}{k}\bone} +\frac{1}{2} I =bR - \dfrac{b}{k} J -\frac{1}{2} I \]
and \begin{align*}
\tilde \Gamma_1 & = \lim_{n \to \infty} E\left[ M_{n+1}^T M_{n+1}\Big\vert\mathcal{F}_n \right] = \left[\frac{1}{k}I- \frac{1}{k^2} J\right].
\end{align*}
Now similar to the expression obtained in equation $\eqref{Integral-expression}$ we get 
\begin{align}
e^{uH^T} \tilde \Gamma_1 e^{uH}& = e^{-u}\left[\frac{1}{k} e^{ bu R^T } e^{ bu R }- \frac{e^{2bu}}{k^2} J \right]
\end{align}
and therefore,

\begin{equation}
\tilde{\Sigma}_1 = \int_0^\infty e^{uH^T} \tilde \Gamma_1 e^{uH} du =\frac{1}{k}\left[ \Lambda_1- \frac{1}{k(1-2b)}J\right].
\end{equation}
where $\Lambda_1$ satisfies the Sylvesters equation \eqref{Equ:Sylv}.
\end{proof}

\begin{proof}[Proof of Theorem \ref{Thm:CLT-rho=1/2}]
Suppose $\rho =1/2$, then the result holds under the following two assumptions 
(see appendix Theorem \ref{Appn:CLT-rho})

\begin{enumerate}
\item \begin{equation}\label{Equ:Lindberg-conditon}
\frac{1}{n} \sum_{m=1}^n E\left[ \|M_m R\|^2 I\{ \|M_mR\|\geq \epsilon \sqrt{n}\} \Big\vert \mathcal{F}_{m-1}\right] \to 0. 
\end{equation}
a.s. or in $L^1$, for all $ \epsilon >0$.
\item For some $\epsilon >0$, as $y \to \dfrac{1}{k} \bone$
\begin{equation}\label{Equ:Taylor-Approx}
h(y) = h\left(\dfrac{1}{k}\bone \right)+ \left(y-\dfrac{1}{k}\bone\right) Dh\left( \dfrac{1}{k} \bone \right) + o\left(\left\|y- \dfrac{1}{k} \bone \right\|^{1+\epsilon}\right) 
\end{equation}
\end{enumerate}
The Linderberg condition in equation \eqref{Equ:Lindberg-conditon} holds as $\| M_mR\| \leq k(k+1) $ for all $m$, 
and for $\sqrt{n}>\frac{k(k+1)}{\epsilon}$, $I\{ \|M_mR\|\geq \epsilon \sqrt{n}\} = 0$ for all $m$.
The second condition \eqref{Equ:Taylor-Approx} is also satisfied for a twice differentiable function $h$. 
Thus for $\rho = 1/2$, (see appendix Theorem \ref{Appn:CLT-rho}), we have
\[ \frac{\sqrt{n}}{\log n^{\nu-1/2}}\left( Y_n - \dfrac{1}{k}\bone \right)\implies N_k\left(0,\Sigma_2\right)\]
where
\[\Sigma_2 = \lim_{n \to \infty} \frac{1}{(\log n )^{2\nu-1}}\int_0^{\log n } e^{uH^T} \mathbf \Gamma_1 e^{uH} du.\]
Now from equation \eqref{Equ:Var-Integrand}, we get

\begin{align*}
\Sigma_2 &= \frac{1}{k}R^T\left[ \Lambda_2- \lim_{n\to \infty }\frac{1-n^{-(1-2b)}}{(\log n )^{2\nu-1}}J\right] R\\
&= \frac{1}{k}R^T \Lambda_2R 
\end{align*}
where 
\begin{equation} \label{Integral-2}
\Lambda_2=\lim_{n \to \infty} \frac{1}{(\log n )^{2\nu-1}}\int_0^{\log n }\exp \left(-u(1/2 I -b R^T) \right ) \exp \left(-u(bR-1/2I) \right ) du.
\end{equation}
Similarly for $\tilde{Y}_n$ the required Linderberg condition holds that is 
\begin{equation}
\frac{1}{n} \sum_{m=1}^n E\left[ \| M_m \|^2 I\{ \| M_m\|\geq \epsilon \sqrt{n}\} \Big\vert \mathcal{F}_{m-1}\right] \to 0. 
\end{equation}
and therefore by Theorem \ref{Appn:CLT-rho} in the appendix we get
\begin{align*}
\tilde \Sigma_2 &= \frac{1}{k}\left[ \Lambda_2- \lim_{n\to \infty }\frac{1-n^{-(1-2b)}}{(\log n )^{2\nu-1}}J\right] = \frac{1}{k}\Lambda_2 
\end{align*}
where $\Lambda_2$ is as given in equation \eqref{Integral-2}.

\end{proof}

\begin{proof}[Proof of Theorem \ref{Thm:rho<1/2}]
Proof of this theorem follows from Theorem \ref{Appn:CLT-rho} in the appendix.
\end{proof}

\section{Examples}\label{Section-Examples}

\subsection{Linear weight function}\mbox{}\\
Let $w:[0,1] \to \mathbb{R}^+$ be such that:
\[w(y)= \theta-y;\;\;\text{ for }\theta \geq 1,\; \text{ and }y \in [0,1].\]
Then the stochastic approximation algorithm in equation \eqref{SA-recursion} holds with
\begin{equation}\label{Equ:Linear-h}
h(y) = y\left(AR-I\right) 
\end{equation}
for \[A_{k\times k} = \dfrac{1}{k\theta-1}\left(\theta J-I\right).\]
Notice that, an equilibrium point of the associated ODE is also a stationary distribution of $AR$. Thus 
if $AR$ is irreducible and $y^*$ is its unique stationary distribution then by Theorem \ref{Thm:As-Convergence}, we get
\[ Y_n \longrightarrow y^*\;\; a.s..\] 
The above almost sure convergence result was also proved in Theorem $1$ of \cite{BKaur17-1}. In fact, in \cite{BKaur17-1} 
a necessary and sufficient condition for $AR$ to be irreducible is given and also convergence for the case when $ AR $  is reducible is obtained. \\

For the central limit theorem, consider a doubly stochastic matrix $R$. 
In this case, the constant $b$, as defined in equation \eqref{Def:constant-b} is
\[b = -\frac{1}{k\theta-1}\]
and $\rho$ as defined in equation \eqref{Def:rho} is 
\[\rho = 1+\dfrac{\Re(\lambda_s) }{k\theta-1}. \]
Now we separately consider $k=2$ and $k\geq 3$, in order to identify the possible values of $\rho$.  
First let  $k=2,$ and $ R = \begin{bmatrix}
p&1-p\\
1-p&p
\end{bmatrix}$, where $p \in [0,1]$. Then $R$ has eigenvalues $1$ and $2p-1$ and 
\[\rho = 1+\frac{2p-1}{2\theta-1}.\]
Thus,
\begin{equation}
\rho \geq \frac{1}{2}\;\; \iff \;\;2p -1 \geq \frac{1-2\theta}{2}
\label{Equ:k=2-linear}
\end{equation}
Therefore, by Theorem \ref{Thm:CLT-rho>1/2} and Theorem \ref{Thm:CLT-rho=1/2}, we get
\begin{equation}
\sigma_{n}\left(Y_{n,1} - \dfrac{1}{2}\right) \implies N\left(0,\sigma^2\right)
\end{equation}
where $\sigma_n = \begin{cases}
\sqrt{\dfrac{n}{\log n}} &\text{ if }\text{the eigenvalue of } R \text{ (other than $1$) is equal to } \frac{1-2\theta}{2}, \\
\sqrt{n} &\text{ if }\text{the eigenvalue of } R \text{ (other than $1$) is }> \frac{1-2\theta}{2}. \\
\end{cases}$\\

As shown in the figure below, $\rho$ is equal to $\frac{1}{2}$ on the highlighted straight line, and below this line $\rho$ is less than $\frac{1}{2}$. 
Note that for a large 
range of minimum eigenvalue of $R$ and parameter $\theta$, $\rho$ is greater than $\frac{1}{2}$ for which the asymptotic
normality holds with scaling factor $\sqrt{n}$.

\begin{center}
\includegraphics[scale=.6]{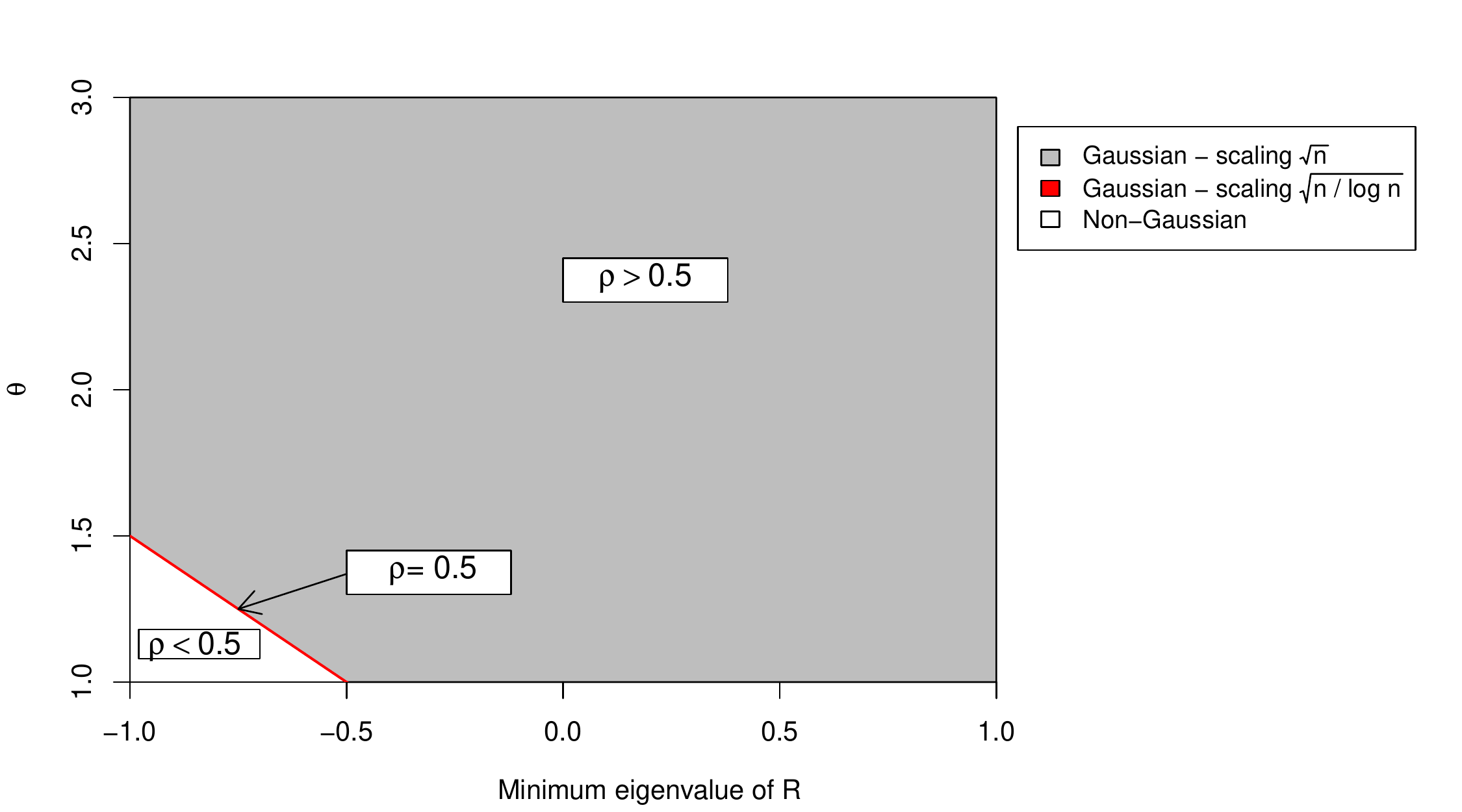}
\captionof{figure}{Range of $\rho$ for given $\theta$ and minimum eigenvalue of $R$}
\end{center}

Now for $k\geq 3$, using the fact that $\Re(\lambda_s) \geq -1$, we get
\begin{align*}
\rho &\geq 1-\dfrac{1}{k\theta-1}\\
% \implies \rho &\geq 1/2 \iff k\theta-1\geq 2
\end{align*}
and therefore for $k\geq 3 $, $\rho \geq 1/2$. Thus, there is no non-Gaussian limiting behavior. 
In fact, in this case we always have Gaussian limit with $\sqrt{n}$ scaling, except when $\rho = \dfrac{1}{2}$, which can only happen when $k=3$ and 
then
\[\rho = 1/2\;\;\;\iff \;\;\;\Re(\lambda_s) = -\frac{3\theta-1}{2}.\]
Which is possible only if $\theta =1$ and $\Re(\lambda_s) = -1$, 
and for a $3\times 3 $ stochastic matrix, there can only be at most one eigenvalue with real part equal to $-1$.

The above result for $k\geq 2$ and $\rho \geq \dfrac{1}{2}$ has also been obtained in Theorem $2$ of \cite{BKaur17-1}.
In fact, central limit theorem for a general class of replacement matrices is given in \cite{BKaur17-1}.

\subsection{Inverse power law weight function} \mbox{}\\ 
Let 
\begin{equation} \label{Equ:exp-w}
w(x) = (\theta+ x)^{-\alpha}, \text{ for }\theta,\alpha>0 
\end{equation} 
and $R$ be a doubly stochastic matrix. 
Then $b = -\dfrac{\alpha}{k\theta +1} $ and therefore by Proposition \ref{Prop-Stability},
$\dfrac{1}{k}\bone$ is a stable equilibrium point if 
\begin{equation}
\Re(\lambda_i) > - \frac{k\theta+1}{\alpha} \;\;\;\text{ for } i = 1,2,\cdots ,s.
\label{Ex:3-Stability}
\end{equation}
In particular, the above condition for stability holds if $R = I$ or if $\alpha < k\theta+1$. Also,
\[ \rho = 1+\dfrac{\alpha}{k\theta+1}\Re(\lambda_s).\]
and then 
\[ \rho \geq 1/2 \iff \Re(\lambda_s)\geq -\frac{k\theta+1}{2\alpha}.\]
Therefore, the scaling for central limit theorems depend on the values of $\alpha$ and $\theta$. \\
In particular, for $\alpha =1$ the condition for stability in equation \eqref{Ex:3-Stability} holds
for $k\geq 2$ and thus by Theorem \ref{Thm:DS-SLLN} we get
\[Y_n \longrightarrow \frac{1}{k} \bone\;\; a.s. \] 
% For $k\geq 2$, since $\Re(\lambda_s)\geq -1$, $\rho \geq 1-\dfrac{1}{k\theta+1}.$
% Therefore for $k\theta \notin [0,1] $ or $ \Re(\lambda_s)>-\dfrac{1+k\theta}{2}$, by Theorem \ref{Thm:CLT-rho>1/2} 
% \begin{equation}
% \sqrt{n}\left(Y_n - \dfrac{1}{k} \bone\right) \implies N_k\left(0,\Sigma\right)
% \end{equation}
% For the remaining cases we have the following limits:
% \begin{enumerate}
% 
% \item[(i)] 
% If $\Re(\lambda_s) = -\dfrac{1+k\theta}{2} $ and $ k\theta \in [0,1] $ then by Theorem \ref {Thm:CLT-rho=1/2} 
% \begin{equation}
% \sqrt{\frac{n}{\log n}}\left(Y_n - \dfrac{1}{k} \bone\right) \implies N_k\left(0,\tilde \Sigma\right) 
% \end{equation}
% 
% \item[(ii)] If $\Re(\lambda_s) < -\dfrac{(1+k\theta)}{2} $ and $ k\theta \in [0,1] $ then 
% 
% \begin{equation}
% \frac{n^\rho}{\log n^{\nu-1}}\left( Y_n - \dfrac{1}{k}\bone \right) - X_n \longrightarrow 0 \;\;a.s.. 
% \end{equation}
% where $\rho,\nu $ and $X_n$ are as defined in Theorem \ref{Thm:CLT-rho<1/2} 
% \end{enumerate}
For the central limit theorem, as shown in the figure below $\rho$ takes value more than $\frac{1}{2}$ in the shaded region and thus 
for given $\theta$ and $R$, the central limit theorems hold accordingly.

\includegraphics[scale=0.7]{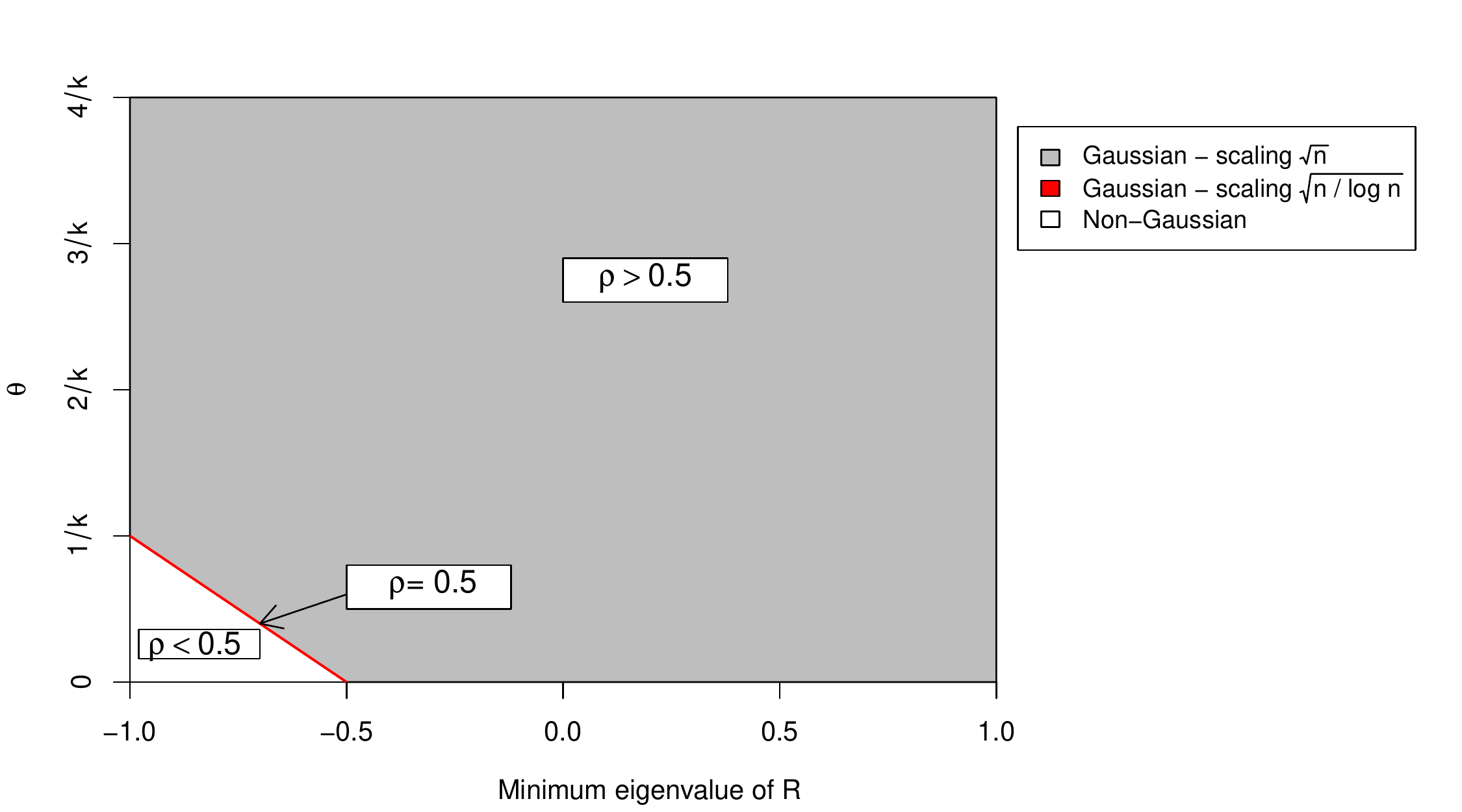} 
\captionof{figure}{Range of $\rho$ for given $\theta$ and minimum eigenvalue of $R$}

Note that, above a critical value for $\theta$, we observe asymptotic
normality with the scaling factor of $\sqrt{n}$ for any choice of replacement matrix $R$.
Further, the region for $\rho<\dfrac{1}{2}$ decreases as we increase the number of colours $k$. \\

Now we show that, the uniform vector $\dfrac{1}{k} \bone$ is not necessarily a stable equilibrium for any doubly stochastic matrix. Let
\[R = \begin{bmatrix} 0&0&0&1\\0&0&1&0\\0&1&0&0\\1&0&0&0\end{bmatrix},\] 
then the condition for stability in equation \eqref{Ex:3-Stability}
is not satisfied for $\alpha >k\theta +1$, and for this choice of $\alpha$ the equilibrium point $\dfrac{1}{4}\bone$ is an unstable point.
In fact, by Theorem $1$ of \cite{Pem90} one can show that in this case
\[ P\left(Y_n\to \frac{1}{4}\bone\right) = 0. \]

\subsection{Exponential weight function} \mbox{}\\
Let
\[w(x) = \exp\left(-\dfrac{x}{\theta}\right ), \text{ for }\theta>0\]
then 
\[b = - \frac{1}{k\theta}\;\;\;\text{ and } \rho = 1+\frac{ \Re(\lambda_s)}{k\theta}\]
and thus $\dfrac{1}{k} \bone $ is a stable equilibrium for a doubly stochastic matrix if 
\[ \Re(\lambda_i) > -k\theta; \;\;\text{ for } i=1,2,\cdots, s.\]
and 
\[\rho \geq 1/2 \iff \Re(\lambda_s) \geq -\frac{k\theta}{2}\] 
% and since $k\theta \geq 2$, $\rho \leq 1/2$ only when $\Re(\lambda_s) =-1$.
As shown in the following graph, $\rho > \dfrac{1}{2}$ in the shaded region and equal to $\dfrac{1}{2}$ on the straight line.

\begin{center}
\includegraphics[scale=0.7]{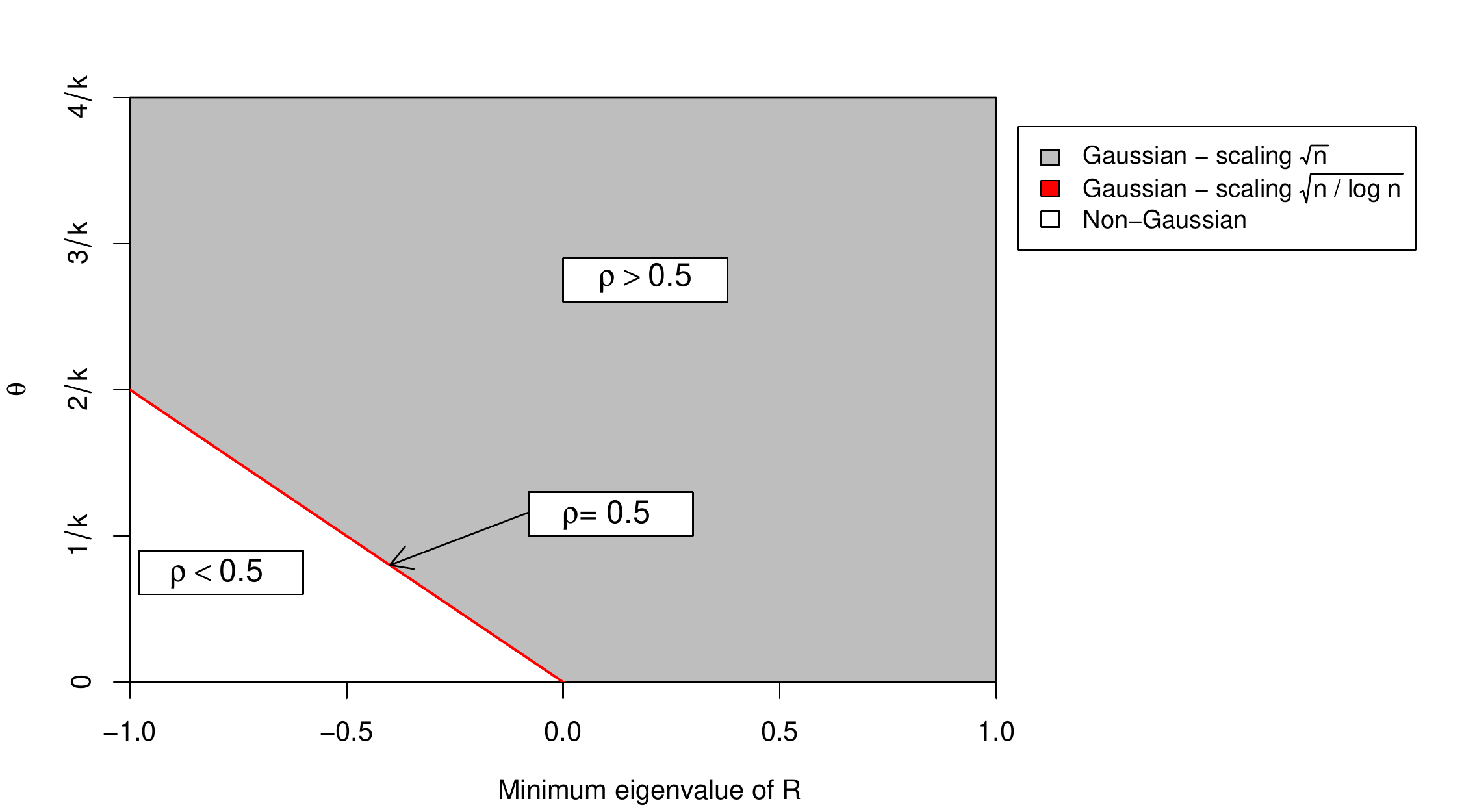} 
\captionof{figure}{Range of $\rho$ for given $\theta$ and minimum eigenvalue of $R$}
\end{center}

 \appendix
\section{Appendix}
In this section we state some of the general results in the stochastic approximation theory, 
for the discrete stochastic approximation algorithm $X_n$ in $\mathbb{R}^d$ (as defined in Section \ref{SAalgorithm}), satisfying  
\[ X_{n+1} = X_n +\gamma_{n+1} h\left(X_n\right)+\gamma_{n+1} M_{n+1}R.\]
% The results are obtained in the stochastic approximation theory, by relating the recursion given  in \ref{recurssion} with the ODE
% \[\dot X = h(X).\]

\begin{definition}
A set $A$ is called \emph{stable} (or attractor) if for each $\epsilon >0$ there is a $\delta>0$ such that all trajectories 
starting in  $N_\delta (A)$ never leave $N_\epsilon(A)$.
% If the trajectories ultimately go to $A$, then $A$ is called  asymptotically stable. 
\end{definition}
When $h$ is differentiable, an equivalent definition of a stable (or attractor)  equilibrium point is given below.
\begin{definition}(Stable/unstable equilibrium point). An equilibrium $x^*$ is called
stable (or attractor) if all the eigenvalues of $Jh(x^*)$ have negative
real part, and it is called unstable otherwise.
  \end{definition}

\begin{theorem}[Almost sure convergence] \label{Thm:AS-convergence}
 Assume that $h$ is a Lipschitz function and 
 \[ \sup_{n\geq n_0} \mathbb{E}\left[\|M_{n+1}\|^2\vert \mathcal{F}_n\right]< \infty \;\; a.s., \]
 and $\gamma_n$ is a positive sequence satisfying 
 \[ \sum_{n=1}^\infty \gamma_n = \infty, \;\;\;\text{ and } \;\;\;\sum_{n=1}^\infty \gamma_n^2<\infty \]
 On the event $A_\infty = \{ \omega\; \vert\; h(X_n(\omega))_{n \geq 0} \text{ is bounded } \}$, $\mathbb{P}(dw)- a.s.$  then 
\begin{itemize}
\item [(a)] (Theorem 5.7 \cite{Benaim99}) The limit set of $X(t)$  that is,
\[L(X(t)) = \cap_{t\geq 0}\overline{ X[t,\infty) } \]
is almost surely an internally chain
 transitive set for the unique solution $\phi(x_0 , t)$ of the mean limit ODE 
\begin{equation} \label{ODE-A}
 \dot{x}=  h(x).
\end{equation}

 \item[(b)] If the only internally chain transitive invariant set are isolated equilibrium points of $h$ then 
 $X_n$ converges a.s. to the set of equilibrium points of $h$.\\
 \item[(c)] If there is a unique equilibrium point that is $\{h = 0\} = \{x^*\}$ and $\phi(x_0 , t) \to x^*$ as $t \to \infty$ locally uniformly in $x_0$ , then 
 \[X_n \to x^* \;\;as\;\; n \to \infty\; a.s..\]
\item[(c)] If $k = 2$ and $\{h = 0\}$ is locally finite, then 
\[L(X(t))  \subset \{h = 0\} \]
i.e. 
\[X_n \to X_\infty \in \{h =0\}.\]
\end{itemize}
\end{theorem}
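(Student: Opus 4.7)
The plan is to follow Bena\"im's asymptotic-pseudotrajectory (APT) framework, which is precisely how Theorem 5.7 of \cite{Benaim99} is proved. First I would construct a continuous-time interpolation $X(\cdot)$ of the discrete iterates $(X_n)$, then show that on the event $A_\infty$ this interpolation is an APT of the flow $\phi$ generated by $\dot x = h(x)$; Bena\"im's general theorem then identifies $L(X(\cdot))$ with an internally chain transitive invariant set of $\phi$, which is assertion (a). Parts (b), (c), and the planar (c$'$) then drop out by standard characterisations of internally chain transitive sets.

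\textbf{Interpolation and the APT property.} Set $\tau_0 = 0$, $\tau_n := \sum_{k=1}^n \gamma_k$, define $X(\tau_n):=X_n$, and extend linearly between the $\tau_n$'s; since $\sum \gamma_k = \infty$ this is well defined on $[0,\infty)$. The one-step update splits as $\gamma_{n+1}h(X_n) + \gamma_{n+1}M_{n+1}R$. Because $\sum \gamma_k^2 < \infty$ and $\mathbb E[\|M_k R\|^2\vert \mathcal F_{k-1}]$ is a.s.\ bounded, the martingale $S_n := \sum_{k=1}^n \gamma_k M_k R$ converges almost surely by the $L^2$-martingale convergence theorem; in particular $S_{n+m} - S_n \to 0$ uniformly for $m$ in any bounded $\tau$-window. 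On $A_\infty$, where $\|h(X_n)\|$ is bounded, a standard Gronwall comparison between the polygonal path and the exact flow $\phi(X(\tau_n),\cdot)$, using Lipschitz continuity of $h$ to absorb the drift error and tail smallness of $(S_n)$ to absorb the noise error, yields for every $T > 0$
\[
\lim_{t\to\infty}\ \sup_{0\le s\le T} \bigl\|X(t+s) - \phi(X(t),s)\bigr\| = 0 \quad \text{a.s.\ on }A_\infty,
\]
which is exactly the APT property.

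\textbf{From APT to the four conclusions.} Once $X(\cdot)$ is an APT, (a) is immediate from Bena\"im's theorem. For (b), an internally chain transitive set contained in a locally finite collection of equilibria must be a single point, because a nontrivial $(\epsilon,T)$-pseudo-orbit would have to connect distinct equilibria, contradicting their isolatedness; hence $X_n$ converges a.s.\ to an element of $\{h=0\}$. The first (c) follows because locally uniform global attraction of $x^*$ forces $\{x^*\}$ to be the only nonempty internally chain transitive set. For the planar (c$'$), I would combine internal chain transitivity with the Poincar\'e--Bendixson dichotomy in $\mathbb R^2$: chain recurrent sets in the plane for a smooth flow with a locally finite zero set decompose into equilibria, periodic orbits, and connecting heteroclinic/homoclinic arcs, and a standard argument (see \cite{Benaim99}) rules out the latter two when $\{h=0\}$ is locally finite, forcing $L(X(\cdot))\subset\{h=0\}$.

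\textbf{Main obstacle.} The delicate point is that the hypothesis controls $h(X_n)$, not $X_n$ itself. To execute the Gronwall step on $A_\infty$ I expect to need boundedness of $X(\cdot)$ on every $\tau$-window of fixed length; this can be recovered from $X_n - X_0 = \sum_{k=1}^n \gamma_k h(X_{k-1}) + S_n$, since on any window of constant $\tau$-length the drift contribution is Lipschitz-controlled by $\sup_k \|h(X_k)\|$ and the noise contribution is an a.s.-Cauchy tail. A clean way to make this rigorous is to localise via the stopping times $\sigma_R := \inf\{n : \|h(X_n)\| > R\}$, prove the APT property on $\{\sigma_R = \infty\}$, and take $R \uparrow \infty$; this localisation is where the bulk of the technical care goes, while the Bena\"im machinery then delivers all four conclusions essentially for free.
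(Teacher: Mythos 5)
The paper contains no proof of this appendix theorem: it is imported as a black-box result, with part (a) attributed to Theorem 5.7 of \cite{Benaim99} and the remaining items being the standard corollaries of that theorem (see also \cite{Borkar2008}, which the paper invokes in its proof of Theorem \ref{Thm:As-Convergence}). Your sketch reconstructs precisely that standard argument --- piecewise-linear interpolation on the timescale $\tau_n=\sum_{k\le n}\gamma_k$, almost sure convergence of the perturbing martingale $\sum_k\gamma_k M_kR$, a Gronwall comparison yielding the asymptotic-pseudotrajectory property, and then the limit-set theorem plus connectedness of internally chain transitive sets for (b) and (c), and Poincar\'e--Bendixson for the planar case. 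So there is nothing in the paper to compare against; measured against the source the paper relies on, your outline is the correct one.

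Two points would need care in a full write-up. First, the hypothesis $\sup_{n\ge n_0}\mathbb{E}\left[\|M_{n+1}\|^2\,\vert\,\mathcal{F}_n\right]<\infty$ is only an almost sure bound, so the $L^2$-martingale convergence theorem cannot be applied directly; one must use the local convergence theorem, i.e.\ convergence of $\sum_k\gamma_kM_kR$ on the event $\left\{\sum_k\gamma_k^2\,\mathbb{E}\left[\|M_k\|^2\,\vert\,\mathcal{F}_{k-1}\right]<\infty\right\}$, or localise with stopping times; your final paragraph localises on $h(X_n)$ but not on the conditional second moments. Second, and more substantively, you correctly flag that the stated event $A_\infty$ controls $h(X_n)$ rather than $X_n$: your window argument recovers enough boundedness for the APT property, but Bena\"im's Theorem 5.7 also needs precompactness of the whole trajectory to conclude that $L(X)$ is a \emph{nonempty compact} internally chain transitive set, and boundedness of $h(X_n)$ alone does not deliver that (e.g.\ if $h$ is globally bounded). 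The standard hypothesis, and almost certainly the intended one here, is $\sup_n\|X_n\|<\infty$ a.s.; in the paper's application this holds automatically since $Y_n$ and $\tilde{Y}_n$ live in the probability simplex, but as a proof of the theorem as literally stated your argument would stall at this point without replacing $A_\infty$ by the boundedness of the iterates themselves.
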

\noindent
% 
% \begin{theorem}[Hartman- G]
%  
% \end{theorem}

% For the CLT results, suppose the Jordan canonical decomposition of $Dh(x^*)$ is given by
% \[ T^{-1}Dh(x^*) T = diag(J_1, J_2,\cdots, J_p)\]
Let $\lambda_1, \lambda_2, \cdots, \lambda_p$ be the eigenvalues of $Dh(x^*)$
with $dim(J_t) = \nu_t $ for every $t =1,2,\cdots, p$.
Let \[ \rho = \min_{1\leq i\leq p} \{ -Re(\lambda_i )\} \;\;\;\text{ and }\;\;\;\;\nu = \max \{\nu_t:Re(\lambda_t) = \rho \}.\]

We will need the following assumptions to state the central limit theorems for different values of $\rho$.
\begin{enumerate}
  \item[A1 ] $x^*$ is a stable equilibrium.
   \item[A2 ] The Lindeberg condition 
   \[\frac{1}{n}\sum_{m=1}^n E\left[ \| M_m\|^2 \mathbf{I} \{ \|M_m\|\geq  \epsilon \sqrt{n}\} \Big\vert \mathcal{F}_{m-1}  \right]\to 0 \;\; a.s. \]   
   \item[A3 ] $\dfrac{1}{n}\sum_{m=1}^n E\left[M_{m+1}^T M_{m+1}\Big\vert \mathcal{F}_m \right] \to\mathbf \Gamma$ \;\;\; a.s. or in $L_1$
  \end{enumerate}

\begin{theorem}[Scaling Limits \cite{Zhang2016}] \label{Appn:CLT-rho}
  Suppose   $X_n \to x^*$ a.s. then 
  \begin{enumerate}
  \item if $\rho >1/2$, and the following two conditions are satisfied 
  % \[\sup_{n \geq 0} E\left[\|M_{n+1} \|^{2+\delta}\Big\vert \mathcal{F}_n\right] <\infty\]
  where $\mathbf \Gamma$ is deterministic symmetric positive semidefinite matrix, and assumptions $A1$, $A2$, and $A3$ hold then
  \[\sqrt{n}\left( X_n  - x^* \right) \cd N\left(0, \Sigma_1\right) \]
  where 
  \begin{equation}\label{limit-Var-Cov-matrix}
   \Sigma_1 = \int_0^\infty \left(e^{ - \left(Dh(x^*)+ 1/2I\right)u}\right)^t\mathbf \Gamma e^{ - \left(Dh(x^*)+1/2I\right)u} du  
  \end{equation}
   \item  Suppose $\rho =1/2$ and $w$ is a twice differentiable function and  assumption $A2$  holds then
%    and $Dh(x^*) = \rho I$,
   
  \[\frac{\sqrt{n}}{\log n^{\nu -1/2}}\left( X_n -x^*\right ) \implies N\left( 0,\Sigma_2 \right). \]
  where 
  \[\Sigma_2  = \lim_{n\to\infty} \frac{1}{(\log n)^{2\nu-1}} \int_0^{\log n} \left(e^{ - \left(Dh(x^*)- 1/2I\right)u}\right)^t\mathbf \Gamma e^{ - \left(Dh(x^*)- 1/2I\right)u} du \]
  \item Suppose $0<\rho <1/2$, $w$ is twice differentiable and assumption $A3$ holds then
  \[\frac{n^{\rho}}{\log n ^{\nu -1}} \left(X_n - x^*\right) - X_n \longrightarrow 0 \]
  where $X_n$ is random vector defined as
  \[X_n = \sum_{i: Re(\lambda_i) = \rho} e^{-iIm(\lambda_i)\log n }\xi_i v_i\]
   and $v_i$ is the left eigenvector of $Dh(x^*)$ with respect to the eigenvalue $\lambda_i$.
  
    \end{enumerate}
 \end{theorem}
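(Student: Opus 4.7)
The plan is to derive Theorem \ref{Thm:rho<1/2} as a direct specialization of part (3) of the general stochastic-approximation scaling-limit result Theorem \ref{Appn:CLT-rho}, applied to the recursion \eqref{Urn-Recursion} for $Y_n$, which has drift $h$, step size $\gamma_n = 1/(n+1)$, and martingale increment $M_{n+1}R$. The work splits into (a) verifying the hypotheses of that abstract theorem in our setting, and (b) translating its abstract conclusion back into the language of the spectrum of $R$.

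For (a), three items must be checked. First, a.s.\ convergence $Y_n \to y^* := \frac{1}{k}\bone$ is exactly \textbf{(A2)}. Second, twice differentiability of $h$ at $y^*$ follows from the standing assumption that $w$ is twice differentiable, together with the fact that $S_w$ is bounded away from zero in a neighborhood of $y^*$. Third, assumption $A3$ --- almost sure convergence of $\frac{1}{n}\sum_{m=1}^n E[(M_{m+1}R)^T(M_{m+1}R)\mid \mathcal{F}_m]$ to a deterministic limit $\Gamma$ --- is obtained by noting that the conditional covariance of $\chi_{m+1}$ given $\mathcal{F}_m$ equals $\mathrm{diag}(p_m) - p_m^T p_m$ with $p_m = \bw(Y_m)/S_w(Y_m)$; by \textbf{(A2)}, $p_m \to \frac{1}{k}\bone$ a.s., so this covariance tends to $\frac{1}{k}I - \frac{1}{k^2}J$, and a Ces\`aro averaging of the a.s.\ convergent conditional covariances gives $\Gamma = R^T\left(\frac{1}{k}I - \frac{1}{k^2}J\right)R$.

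For (b), the spectral data are already identified in Section \ref{Main-results-CLT}. The Jacobian $Dh(y^*) = bR - \frac{b}{k}J - I$ has $-1$ as an eigenvalue along $\bone^T$, and for each nonunit eigenvalue $\lambda_i$ of $R$ with right eigenvector $v_i^T$ orthogonal to $\bone^T$ (so that $Jv_i^T = 0$), we have $Dh(y^*)v_i^T = (b\lambda_i - 1)v_i^T$. Therefore the appendix's $\rho$ equals $\min\{1,\, 1 - b\Re(\lambda_s)\}$, which agrees with the paper's definition \eqref{Def:rho} whenever it lies in $(0, 1/2)$; the indices realizing the minimum are precisely those with $\Re(\lambda_i) = (1-\rho)/b$, and the corresponding right eigenvectors are the $v_i$. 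Substituting these identifications (together with $\Im(b\lambda_i - 1) = b\Im(\lambda_i)$) into the abstract spectral expansion produced by part (3) of Theorem \ref{Appn:CLT-rho} yields the stated formula for $X_n$.

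The main obstacle here is conceptual rather than computational: it is to recognise that all of the delicate fluctuation analysis for the case $\rho < 1/2$ --- in particular the oscillatory, non-Gaussian limiting behaviour reflected by the random coefficients $\xi_i$ and the rotating phases $e^{ib\Im(\lambda_i)\log n}$ --- is already packaged inside Theorem \ref{Appn:CLT-rho}(3). Once the three hypotheses have been checked and the spectrum of $Dh(y^*)$ has been related to the spectrum of $R$, the conclusion transfers verbatim; no additional stochastic-approximation machinery is needed.
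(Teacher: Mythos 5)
Your proposal does not prove the statement in question. The statement is the \emph{general} stochastic-approximation scaling-limit theorem (Theorem \ref{Appn:CLT-rho}), formulated for an abstract recursion $X_{n+1} = X_n + \gamma_{n+1}h(X_n) + \gamma_{n+1}M_{n+1}R$ with an arbitrary equilibrium $x^*$. What you have written is a proof of Theorem \ref{Thm:rho<1/2} (the urn-specific $\rho<1/2$ result) \emph{by invoking} Theorem \ref{Appn:CLT-rho}(3) as a black box. That is circular relative to the task: the one thing your argument never establishes is the content of Theorem \ref{Appn:CLT-rho} itself --- the existence of the complex random variables $\xi_i$, the $n^{\rho}/(\log n)^{\nu-1}$ rate, the Gaussian limits and the covariance integrals in parts (1) and (2). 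You say explicitly that all of the delicate fluctuation analysis is ``already packaged inside Theorem \ref{Appn:CLT-rho}(3)''; that packaged analysis is exactly what a proof of this statement would have to supply, and none of it appears.

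For what it is worth, the paper itself offers no proof of this theorem either: it is imported from Zhang \cite{Zhang2016}, and the paper's proofs of Theorems \ref{Thm:CLT-rho>1/2}, \ref{Thm:CLT-rho=1/2} and \ref{Thm:rho<1/2} consist of precisely the hypothesis-checking and spectral bookkeeping you perform --- verifying \textbf{(A2)}, the Lindeberg condition via the uniform bound on $\|M_mR\|$, the limit $\Gamma = R^T\left(\frac{1}{k}I-\frac{1}{k^2}J\right)R$, and the identification of the eigenvalues $b\lambda_i-1$ of $Dh\left(\frac{1}{k}\bone\right)$. So as an account of Theorem \ref{Thm:rho<1/2} your outline matches the paper's (one-line) argument and fills in sensible detail; but as a proof of the stated theorem it is a complete gap. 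A genuine proof would need to reproduce Zhang's analysis --- linearization of the recursion about $x^*$, a variation-of-constants representation along the Jordan decomposition of $Dh(x^*)$, a martingale CLT for the $\rho\geq 1/2$ cases, and an almost-sure analysis of the dominant oscillatory modes for $\rho<1/2$ --- or else state plainly that the result is cited rather than proved.
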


%  \textbf{General case}
%  For the central limit theorem for general case, following assumptions were used by Zhang \cite{Zhang2016}.
% \begin{enumerate}
%  \item[{\bf (A1)}] $h(x^*) = 0$, $h$ is differentiable at $x^*$ and all the eigenvalues of $Dh(x^*)$ have positive real part. This implies as $x \to x^*$
%  \[ h(x) = h(x^*) + (x-x^*) Dh(x^*) + o(\|x-x^*\|) \]
%  \item[{\bf (A2)}] A1 and for some $\epsilon >0$, as $y \to x^*$
%  \[ h(x) = h(x^*) + (x-x^*) Dh(x^*) + o(\|y-x^*\|^{1+\epsilon}) \]
%  \item[{\bf (A3)}] Lindeberg's condition
%  \[\frac{1}{n} \sum_{m=1}^n E\left[ \|M_m \|^2 I\{ \|M_m\|\geq \epsilon \sqrt{n}\} \Big\vert \mathcal{F}_{m-1}\right] \to 0 \]
%  a.s. or in $L^1$, for all $ \epsilon >0$. Further suppose that for a positive definite matrix $\mathbf \Gamma$
%  \[ \frac{1}{n}\sum_{m=1}^nE\left[M_{m}^T M_{m}\Big\vert \mathcal{F}_{m-1} \right] \to\mathbf \Gamma\]
%  a.s. or in $L^1$.
% \end{enumerate}
% 
% \begin{theorem}[Central Limit Theorem- by Zhang \cite{Zhang2016}]\label{CLT-rho-1/2}
%   When $\rho = 1/2$ then under assumption $(A1),(A2)$, and $(A3)$ 
%   \[\frac{\sqrt{n}}{\log n^{\nu -1/2}}\left( X_n -x^*\right ) \implies N\left( 0,\tilde \Sigma \right)\]
%   where $\tilde\Sigma= $
% \end{theorem}

\bibliographystyle{abbrv}
\bibliography{ref}

\end{document}